\documentclass[times]{article}

\usepackage[margin=3.3cm]{geometry}
\usepackage{moreverb}

\usepackage[colorlinks,bookmarksopen,bookmarksnumbered,citecolor=red,urlcolor=red]{hyperref}

\usepackage{amsthm}
\usepackage{amsmath}
\usepackage{amssymb}
\usepackage{tikz,pgfplots}
\usepackage{subcaption}
\usepackage{multirow}
\usepackage{enumerate}
\usepackage{enumitem}
\usepackage{stmaryrd}
\usepackage{cancel}

\def\nref#1{{\rm (\ref{#1})}}

\theoremstyle{plain}
\newtheorem{theorem}{Theorem}[section]

\newtheorem{definition}{Definition}[section]

\newtheorem{remark}{Remark} [section]

\DeclareMathOperator*{\argmin}{argmin}

\newcommand{\HDD}{\ensuremath{\mathbf{H}_{\text{DD}}}}
\newcommand{\rhs}{\ensuremath{\mathbf{b}}}

\newcommand{\matid}{\ensuremath{\mathbf{{I}}}}
\newcommand{\bu}{\ensuremath{\mathbf{u}}}
\newcommand{\by}{\ensuremath{\mathbf{y}}}

\newcommand{\bP}{\ensuremath{\mathbf{P}}}
\newcommand{\bB}{\ensuremath{\mathbf{B}}}

\newcommand{\bM}{\ensuremath{\mathbf{M}}}
\newcommand{\bN}{\ensuremath{\mathbf{N}}}
\newcommand{\bW}{\ensuremath{\mathbf{W}}}
\newcommand{\bZ}{\ensuremath{\mathbf{Z}}}

\newcommand{\bQ}{\ensuremath{\mathbf{Q}}}

\newcommand{\bY}{\ensuremath{\mathbf{Y}}}

\newcommand{\bv}{\ensuremath{\mathbf{v}}}
\newcommand{\bx}{\ensuremath{\mathbf{x}}}
\newcommand{\bb}{\ensuremath{\mathbf{b}}}

\newcommand{\bA}{\ensuremath{\mathbf{A}}}

\newcommand{\bH}{\ensuremath{\mathbf{H}}}
\newcommand{\br}{\ensuremath{\mathbf{r}}}

\newcommand{\bz}{\ensuremath{\mathbf{z}}}



\newcommand{\range}{\ensuremath{\operatorname{range}}}


\newcommand{\revised}[1]{{#1}}

\usepackage{stmaryrd}


\usetikzlibrary{fit,calc}

\title{Improved Polynomial Bounds and Acceleration of GMRES\\ by Solving a min-max Problem on Rectangles, and by Deflating.\footnote{This version dated \today}}

\author{Nicole Spillane\thanks{CNRS, CMAP, Ecole Polytechnique, Institut Polytechnique de Paris, 91128 Palaiseau Cedex, France (\textit{nicole.spillane@cmap.polytechnique.fr}). This work was supported in part by the ANR JCJC project DARK (research grant ANR-24-CE46-1633).}   
~and Daniel B. Szyld\thanks{Department of Mathematics, Temple University, Philadelphia, PA 19122, USA (\textit{szyld@temple.edu})} 
}
\begin{document}

\date{}

\maketitle

\textbf{Keywords: } GMRES bounds, Min-Max problem on rectangle, Weighted GMRES, Deflated GMRES,
left preconditioning, right preconditioning, split preconditioning

\textbf{AMS Subject Classification: } 65F10, 65Y05, 68W40

\pagestyle{myheadings}
\thispagestyle{plain}
\markboth{Nicole SPILLANE and Daniel B. SZYLD}{Improved GMRES Bounds}

\abstract{
Polynomial convergence bounds are considered for left, right, and split preconditioned GMRES. They
include the cases of Weighted and Deflated GMRES for a linear system \mbox{$\bA\bx=\bb$}. 
In particular, the case of positive definite $\bA$ is considered. The well-known polynomial bounds are 
generalized to the cases considered, and then reduced to solving a min-max problem on rectangles on the complex plane. Several approaches are considered and compared. The new bounds can be improved by using specific deflation spaces and preconditioners. This in turn accelerates the convergence of GMRES. Numerical examples illustrate the results obtained.}

\section{Introduction}
Let $\bA \in \mathbb C^{n \times n}$ be a non-singular matrix. Our focus is on solving a large sparse linear system of the form  
\begin{equation} \label{linsys:eq}
\bA \bx = \mathbf b. 
\end{equation}

We consider weighted GMRES with split preconditioning by $\bH_L$ on the left and $\bH_R$ on the right. We refer to this as GMRES preconditioned by $(\bH_L, \bH_R)$. What is meant is that GMRES is applied to 
\begin{equation} \label{prec:eq}
\bH_L \bA \bH_R \bu = \revised{ \bH_L} \mathbf b \mbox{ ~ and~} \bx = \bH_R \bu. 
\end{equation}
It is assumed that $\bA$, $\bH_L$ and $\bH_R$ are all non-singular. 
Throughout, we use the notation $\bH= \bH_R \bH_L$ for the combined preconditioner. The term weighted GMRES \cite{zbMATH01268410} corresponds to the use of a non-standard inner product within GMRES, here the inner product induced by some Hermitian positive definite (hpd) \mbox{matrix $\bW$.} Weighted GMRES is studied in \cite{embreedeflating17,zbMATH06376430}; \revised{see also \cite{embree2025extending}
where the different inner products are used to study the convergence of GMRES in the 2-norm.}
As we show in detail in  Section~\ref{gen.case:sec} one bound for the 
(ideal) preconditioned and weighted GMRES residual is
\begin{equation} \label{first_bound:eq}
\| \bH_L \br_k  \|_\bW \leq \min\limits_{q \in \mathbb P_k; \, q(0) = 1} \{ \|q(\bH_L \bA \bH_R)\|_\bW \}\|\bH_L \br_0 \|_{\bW},
\end{equation}
where $\br_0 = \bb - \bA \bx_0$ is the initial residual and $\mathbb P_k$ is the set of polynomials of degree at 
\mbox{most $k$.}  Our goal in this paper is to obtain good bounds for the minimization problem in \eqref{first_bound:eq}
and, at the same time, look for appropriate preconditioners and weighting matrices 
(as well as deflation spaces) to obtain better bounds. To this end, also in Section~\ref{gen.case:sec}, 
we use the Crouzeix-Palencia result \cite{zbMATH06760188}, and obtain
\[
\min\limits_{q \in \mathbb P_k; \, q(0) = 1} \{ \|q(\bH_L \bA \bH_R)\|_\bW \}
\leq (1+ \sqrt 2) \min\limits_{q \in \mathbb P_k; \, q(0) = 1} \max\limits_{z \in FOV^\bW(\bH_L\bA\bH_R)} |q(z)|, 
\] 
where $FOV^\bW$ stands for the field of values in the $\bW$-inner product. 

We also consider Deflated GMRES beginning in Section~\ref{deflated:sec}; see, e.g., 
\cite{zbMATH01729149,ggln2013,SdSK.2020-GAMM,tang2009comparison} and references therein for a full description. Recent contributions to deflation for GMRES include \cite{arXiv:2502.18317,arXiv:2005.03070}, as well as our own work \cite{zbMATH07931226}.
In Section~\ref{general.deflated:sec}, we study bounds for Deflated GMRES, and we obtain a similar min-max problem as above, but for a different FOV-type set in $\mathbb C$.

\revised{We are able to bound the sets in $\mathbb C$ for each min-max problem by rectangles, by appropriately choosing
the preconditioner, the weighting matrix $\bW$, and the deflation space.} 
In fact, we consider two different deflation spaces (in Subsections~\ref{sec:firstDS} and~\ref{sec:secondDS}), and for each of them, we develop
convergence bounds based on the min-max problem in a rectangle.
Furthermore, these rectangles are parametrized by a threshold parameter $\tau$ defining our deflation spaces.  Section~\ref{sec:minmaxrec} makes the analysis complete by discussing the solution to the min-max problem on rectangles. The new bounds are an improvement over the linear bounds in \cite{zbMATH07860856} and \cite{zbMATH07931226} which inspired the present work.
This study allows us to choose an appropriate preconditioner, deflation space, and
weight matrix for GMRES for specific problems.
This is illustrated numerically in \mbox{Section ~\ref{sec:numerical}.}

To summarize, our contributions in this article consist in a full convergence analysis for 
 split preconditioned GMRES with a non-standard inner product and deflation. In particular the deflation space does not come from approximating eigenvectors of $\bA$. When $\bA$ is positive definite, 
\revised{{\em i.e.}, when the Hermitian part of $\bA$ is hpd},
the bound depends on a min-max problem on a rectangle in the complex plane that is itself approximated by several methods. Finally, we propose to improve the bound and accelerate convergence by deflation. 

\section{GMRES convergence bounds}
\label{sec:1}
\subsection{The general Case}\label{gen.case:sec}

We begin by stating the minimization property of GMRES in the general case with a 
combined preconditioner $\bH$, and with a Hermitian positive definite (hpd) weight matrix $\bW$.

\begin{theorem}[Minimization property]
\label{th:cv}
Consider $\bW$-weighted GMRES applied to $\bA \bx = \rhs$ and preconditioned by $(\bH_L,\bH_R)$. For any $k \in \llbracket 0, n \rrbracket$, the $k$-th iterate is characterized by 
\[
\bx_k = \argmin\limits_{\bx \in \bx_0 + \mathcal K_k(\bH  \bA, \bH \br_0)}\{\|\bH_L (\mathbf b - \bA\bx)\|_{\bW}\}, 
\]
where
\[
\bH := \bH_R \bH_L
\text{ and }
 \mathcal K_k(\bH  \bA, \bH  \br_0) := \operatorname{span} \left\{\bH  \br_0, \bH  \bA \bH \br_0, \dots,  (\bH \bA)^{k-1} \bH  \br_0 \right\}.
\]
Equivalently, the $k$-th residual, $\br_k = \bb - \bA \bx_k$, satisfies
\begin{equation}
\label{eq:minHMri}
\| \bH_L \br_k  \|_\bW = \min\limits_{q \in \mathbb P_k; \, q(0) = 1} \{ \|\bH_L q(\bA \bH) \br_0 \|_{\bW} \} \leq \min\limits_{q \in \mathbb P_k; \, q(0) = 1} \{ \|q(\bH_L \bA \bH_R)\|_\bW \}\|\bH_L \br_0 \|_{\bW}, 
\end{equation}
where $\br_0 = \bb - \bA \bx_0$ and $\mathbb P_k$ is the set of polynomials of degree at most $k$. 
\end{theorem}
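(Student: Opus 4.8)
The plan is to reduce everything to the standard minimization property of $\bW$-weighted GMRES applied to the \emph{explicitly} preconditioned system. Write $\tilde\bA := \bH_L\bA\bH_R$ and $\tilde\bb := \bH_L\bb$, so that by \eqref{prec:eq} the algorithm is ordinary $\bW$-weighted GMRES for $\tilde\bA\bu = \tilde\bb$ with $\bx = \bH_R\bu$. For this system the well-known minimization property states that $\bu_k$ minimizes $\|\tilde\br\|_\bW$ over $\bu_0+\mathcal K_k(\tilde\bA,\tilde\br_0)$, equivalently $\|\tilde\br_k\|_\bW = \min_{q\in\mathbb P_k,\,q(0)=1}\|q(\tilde\bA)\tilde\br_0\|_\bW$, where $\tilde\br = \tilde\bb-\tilde\bA\bu$ and $\tilde\br_0 = \bH_L\br_0$. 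First I would observe that the preconditioned residual is simply $\tilde\br = \bH_L(\bb-\bA\bx) = \bH_L\br$, so the minimized quantity is exactly $\|\bH_L\br\|_\bW$, as required.

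The one genuine computation is an algebraic identity. Using $\bH = \bH_R\bH_L$ one checks by telescoping (collapsing each interior product $\bH_R\bH_L$ into $\bH$) that $(\bH_L\bA\bH_R)^j\bH_L = \bH_L(\bA\bH)^j$ and $(\bH\bA)^j\bH = \bH(\bA\bH)^j$ for every $j\ge 0$; by linearity these extend to $q(\bH_L\bA\bH_R)\bH_L = \bH_L\,q(\bA\bH)$ for any polynomial $q$. From the first identity the Krylov basis vectors satisfy $\tilde\bA^j\tilde\br_0 = \bH_L(\bA\bH)^j\br_0$, hence $\bH_R\,\mathcal K_k(\tilde\bA,\tilde\br_0) = \operatorname{span}\{\bH(\bA\bH)^j\br_0 : 0\le j\le k-1\} = \mathcal K_k(\bH\bA,\bH\br_0)$, where the last equality uses $(\bH\bA)^j\bH = \bH(\bA\bH)^j$. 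Since $\bH_R$ is nonsingular, the change of variables $\bx = \bH_R\bu$ is a bijection between $\bx_0+\mathcal K_k(\bH\bA,\bH\br_0)$ and $\bu_0+\mathcal K_k(\tilde\bA,\tilde\br_0)$ that preserves the objective $\|\bH_L\br\|_\bW = \|\tilde\br\|_\bW$; this yields the stated characterization of $\bx_k$ as an $\argmin$.

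For the residual formula in \eqref{eq:minHMri} I would apply the polynomial identity to the optimal GMRES polynomial: $\|\bH_L\br_k\|_\bW = \min_q\|q(\bH_L\bA\bH_R)\,\tilde\br_0\|_\bW = \min_q\|q(\bH_L\bA\bH_R)\bH_L\br_0\|_\bW = \min_q\|\bH_L\,q(\bA\bH)\br_0\|_\bW$, which gives the equality. The final inequality then follows from submultiplicativity of the $\bW$-operator norm: for each fixed $q$, $\|\bH_L\,q(\bA\bH)\br_0\|_\bW = \|q(\bH_L\bA\bH_R)\bH_L\br_0\|_\bW \le \|q(\bH_L\bA\bH_R)\|_\bW\,\|\bH_L\br_0\|_\bW$, and minimizing over $q\in\mathbb P_k$ with $q(0)=1$ on both sides preserves the bound.

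I expect no serious obstacle: the argument is essentially bookkeeping around the similarity-type identity $q(\bH_L\bA\bH_R)\bH_L = \bH_L\,q(\bA\bH)$ together with the bijective change of variables $\bx=\bH_R\bu$. The only points requiring care are verifying that the preconditioned starting vector is $\tilde\br_0 = \bH_L\br_0$ (so that the two Krylov spaces align after application of $\bH_R$) and invoking the minimization property in the correct non-standard $\bW$-inner product rather than the Euclidean one. The boundary case $k=0$ reduces to $q\equiv 1$ and returns $\|\bH_L\br_0\|_\bW$, consistent with the claim.
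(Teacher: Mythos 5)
Your proposal is correct and follows essentially the same route as the paper's proof: reduce to the standard minimization property of $\bW$-weighted GMRES on the explicitly preconditioned system, use the intertwining identity $q(\bH_L \bA \bH_R)\bH_L = \bH_L\, q(\bA \bH)$ (the paper writes it as $\bH_L (\bA\bH)^j \bH_L^{-1} = (\bH_L \bA \bH_R)^j$), and conclude the inequality by the induced $\bW$-operator-norm bound. If anything, you are more careful than the paper on one point: you spell out the bijection $\bx = \bH_R \bu$ and verify $\bH_R\,\mathcal K_k(\bH_L\bA\bH_R, \bH_L\br_0) = \mathcal K_k(\bH\bA, \bH\br_0)$, whereas the paper's phrasing identifies the two affine Krylov sets directly.
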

\begin{proof}
\revised{At step $k$, GMRES applied to \nref{prec:eq} in the $W$-inner product with initial residual $\br_0$
finds $\bH_R^{-1} x_k$, which is the
minimizer of $\|\bH_L (\mathbf b - \bA\bx_k)\|_{\bW} = \|\bH_L \br_k \|_{\bW}$ for all possible
elements $\bx_k$ of the Krylov subspace  $ \bx_0+\mathcal K_k(\bH  \bA, \bH \br_0) =  \bx_0+\mathcal K_k(\bH_L \bA \bH_R, \bH_L \br_0)$,
where $\br_k = \mathbf b - \bA\bx_k$.
Then, the minimization over the Krylov subspace and the set of polynomials follow.} To obtain the
inequality in \eqref{eq:minHMri}, first note that by definition of the matrix norm,
\[ 
 \|\bH_L q(\bA \bH) \br_0 \|_{\bW}  \leq 
 \|\bH_L q(\bA \bH)\bH_L^{-1}\|_\bW \|\bH_L \br_0 \|_{\bW}, 
\]
and that
\[
  \|\bH_L q(\bA \bH)\bH_L^{-1}\|_\bW  = \|q(\bH_L \bA \bH_R)\|_\bW.
\]
Indeed, it holds that 
$\bH_L (\bA \bH)^0 \bH_L^{-1} = \matid =  (\bH_L \bA \bH_R)^0, \quad \bH_L (\bA \bH) \bH_L^{-1} = \bH_L \bA \bH_R$,\\ $ \bH_L (\bA \bH)^2 \bH_L^{-1} = \bH_L \bA \bH_R \bH_L \bA \bH_R = (\bH_L \bA \bH_R)^2$,  \textit{etc}.
\end{proof}

\begin{remark}
The Krylov subspace does not depend on the choice of left, right or split preconditioning so long as the combined preconditioner 
\revised{$\bH = \bH_R \bH_L$} remains the same. What this choice does influence is the norm that gets minimized. So in that sense, moving (multiplicatively) some preconditioning from right to left has the same effect as weighting.  
\end{remark}

We point out that throughout this article exact arithmetic is assumed. Backward stability analysis as in \cite{zbMATH00827429,zbMATH05029305}, the influence of a perturbed operator as in \cite{zbMATH02030238}, or a perturbed deflation operator as in \cite{zbMATH06238076} have not yet been considered.

\revised{A practical approach for bounding the convergence of GMRES is to consider ideal GMRES, \textit{i.e.}, to find a bound for $\min\limits_{q \in \mathbb P_i; \, q(0) = 1} \{ \|\bH_L q(\bA \bH)\bH_L^{-1}\|_\bW \}$. 
This approximation is discussed in~\cite{zbMATH01200984} as one of six steps that go into the analysis of GMRES; see \cite{Toh97} for the difference between ideal and worst-case GMRES.}

If $\bH_L \bA \bH_R $ is diagonalizable, the term $\min\limits_{q \in \mathbb P_i; \, q(0) = 1} \{ \|q(\bH_L \bA \bH_R)\|_\bW \}$ can be bounded by a min-max problem where the max is taken over a set that contains the spectrum of  $\bH \bA$. For non-normal matrices, a leading constant in the bound appears with the conditioning of eigenvectors of $\bH \bA$ in it. Instead, we state here a convergence result where the GMRES min-max problem is posed over the field of values of the coefficient matrix $\bA$ of the linear system. This result, with the leading constant $(1 + \sqrt{2})$, is given in  \cite{zbMATH06760188} where it is shown that the field of values is a $(1+\sqrt{2})$-spectral set. See, also \cite[Equation (FOV), page 5]{arXiv:2209.01231} with context and related work. There are also extensions of this bound in \cite{zbMATH07122453}.   

In \cite[(1), (3) and Theorem 3.1]{zbMATH06760188}, it is proved (in particular) that for any bounded linear operator $A$ in a complex 
Hilbert space $(\mathcal H, \langle, \rangle_\mathcal{H}, \| \|_{\mathcal H})$ and for any rational function $f$
\revised{with no poles in $FOV^{\mathcal H}(A)$} defined below, 
\begin{equation}
\label{CP:lemmaHilbert}
\|f(A)\|_{\mathcal H} \leq (1 + \sqrt 2)  \sup\limits_{z \in FOV^{\mathcal H}(A)} |f(z)|; \quad FOV^{\mathcal H}(A) = 
\left\{ \langle A \bv, \bv\rangle_{\mathcal H} ; \, \bv\in \mathcal H, \, \| \bv \|_{\mathcal H} = 1 \right\}.
\end{equation} 

In the Hilbert space, $(\mathbb C^{n}, \langle, \rangle_\bW, \| \|_\bW)$, we obtain, for any matrix $\bB \in \mathbb C^{n \times n}$ and polynomial $q$, that 
\begin{equation}
\label{CP:lemma}
\|q(\bB)\|_\bW \leq (1 + \sqrt 2)  \sup\limits_{q \in FOV^\bW(\bB)} |q(z)|,
\end{equation} 
where the field of values of $\bB \in \mathbb C^{n \times n}$ in the $\langle \cdot, \cdot \rangle_\bW$ inner product  is the set
\[
FOV^\bW(\bB) := \left\{ \frac{\langle \bB \bz, \bz\rangle_\bW}{\langle \bz, \bz\rangle_\bW}; \, \bz\in \mathbb C^n \setminus\{0\}\right\}.
\]

\begin{remark}
The field of values of $\bB$ is also, and equally often, called the numerical range of $\bB$. 
By the well-known Hausdorff-Toeplitz theorem, see, e.g., \cite{ch.davis.71}, the field of values is convex and compact. 
\end{remark}

\begin{theorem}[Crouzeix-Palencia bound]
\label{th:CPbound}
The $k$-th residual of $\bW$-weighted GMRES applied to $\bA \bx = \rhs$ and  preconditioned by $(\bH_L,\bH_R)$ is bounded by
\[
\frac{\|\bH_L\br_k\|_\bW}{\|\bH_L \br_0 \|_\bW} \leq (1+ \sqrt 2) \min\limits_{q \in \mathbb P_k; \, q(0) = 1} \max\limits_{z \in FOV^\bW(\bH_L\bA\bH_R)} |q(z)|. 
\]  
Moreover, $ FOV^\bW(\bH_L\bA\bH_R) = FOV^{\bH_L^* \bW \bH_L}(\bA \bH)$, where again $\bH = \bH_R \bH_L$. 
\end{theorem}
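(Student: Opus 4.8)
The statement splits into two independent claims, and my plan is to treat them separately. The displayed bound should follow immediately by chaining two results already available: the minimization property of Theorem~\ref{th:cv} and the Crouzeix--Palencia estimate~\eqref{CP:lemma}. The field-of-values identity should follow from a congruence / change-of-variable argument that exploits the nonsingularity of $\bH_L$.

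For the bound, I would start from the rightmost inequality in~\eqref{eq:minHMri}, so that only $\min_{q}\|q(\bH_L\bA\bH_R)\|_\bW$ remains to be estimated. Applying~\eqref{CP:lemma} with $\bB=\bH_L\bA\bH_R$ gives, for each fixed admissible $q$, the inequality $\|q(\bH_L\bA\bH_R)\|_\bW\le(1+\sqrt2)\sup_{z\in FOV^\bW(\bH_L\bA\bH_R)}|q(z)|$; since the field of values is compact (Hausdorff--Toeplitz, cited in the remark above), the $\sup$ is a $\max$. Taking the minimum over $q$ with $q(0)=1$ on both sides preserves the inequality, and substituting back into~\eqref{eq:minHMri} and dividing by $\|\bH_L\br_0\|_\bW$ should give the claimed bound.

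For the field-of-values identity, I would fix the convention $\langle\bx,\by\rangle_\bW=\by^*\bW\bx$ (so that $FOV^\bW(\bB)$ collects the ratios $\bz^*\bW\bB\bz/\bz^*\bW\bz$), and first remark that $\bH_L^*\bW\bH_L$ is hpd---$\bW$ being hpd and $\bH_L$ nonsingular---so that $FOV^{\bH_L^*\bW\bH_L}(\bA\bH)$ is well defined. A generic element of this set is $\by^*\bH_L^*\bW\bH_L\bA\bH\,\by / (\by^*\bH_L^*\bW\bH_L\,\by)$ for some $\by\neq 0$. Using $\bH=\bH_R\bH_L$ to write $\bH_L\bA\bH=(\bH_L\bA\bH_R)\bH_L$, the substitution $\bz=\bH_L\by$ should turn the numerator into $\bz^*\bW(\bH_L\bA\bH_R)\bz$ and the denominator into $\bz^*\bW\bz$, i.e.\ into a generic element of $FOV^\bW(\bH_L\bA\bH_R)$. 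The crucial point is that $\by\mapsto\bH_L\by$ is a bijection of $\mathbb C^n\setminus\{0\}$ onto itself, which upgrades the computation from an inclusion to an equality of sets.

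I do not expect a genuine obstacle here: the bound is a direct composition of two cited results, and the identity is essentially a one-line change of variables. The only places that require attention are pinning down the inner-product convention (so that the field of values really belongs to $\bH_L\bA\bH_R$ and not to its adjoint), and verifying that the change of variable is a bijection on the nonzero vectors, which is what makes the two fields of values equal rather than merely nested.
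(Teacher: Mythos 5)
Your proposal is correct and follows essentially the same route as the paper: the bound is obtained exactly as in the paper's proof by combining the inequality in \eqref{eq:minHMri} with the Crouzeix--Palencia estimate \eqref{CP:lemma} and upgrading $\sup$ to $\max$ by compactness of the field of values. For the identity $FOV^\bW(\bH_L\bA\bH_R)=FOV^{\bH_L^*\bW\bH_L}(\bA\bH)$, the paper simply states that it ``comes from their definition,'' and your change-of-variables computation with $\bz=\bH_L\by$ (using that $\bH_L\bA\bH=(\bH_L\bA\bH_R)\bH_L$ and that $\by\mapsto\bH_L\by$ is a bijection of $\mathbb C^n\setminus\{0\}$) is precisely the argument being elided there.
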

\begin{proof}
In the residual bound \eqref{eq:minHMri}, we apply the Crouzeix-Palencia result \eqref{CP:lemma} to get
\begin{align*} 
\| q(\bH_L \bA \bH_R)\|_\bW & \leq (1+\sqrt{2})\sup\limits_{z \in FOV^\bW(\bH_L \bA \bH_R)} |q(z)| 
=  (1+\sqrt{2})\max\limits_{z \in FOV^\bW(\bH_L \bA \bH_R)} |q(z)|. 
\end{align*}
We have replaced the $\sup$ by $\max$ as we are maximizing the 
\revised{uniform norm of a polynomial} over a compact set in a finite dimensional space. The equality between the two weighted field of values comes from their definition. 
\end{proof}

In order to solve the min-max problem in Theorem~\ref{th:CPbound}, we must characterize the set 
\[
FOV^{\bH_L^* \bW \bH_L}(\bA \bH) = \left\{ \frac{\langle \bA \bH \bz, \bz\rangle_{\bH_L^* \bW \bH_L}}{\langle \bz, \bz\rangle_{\bH_L^* \bW \bH_L}}; \, \bz\in \mathbb C^n\setminus\{0\} \right\}.  
\]

\subsection{The case when $\bA$ is positive definite (pd) and $\bH = \bH_L^* \bW \bH_L$} 
\label{sub:ApdHnorm}

Assume that $\bA$ is pd, \textit{i.e.}, that its Hermitian part
is pd, $\bH$ is hpd, and the residual is minimized in the $\bH$-norm, \textit{i.e.},
 $\bH = \bH_L^* \bW \bH_L$, 
so that $\| \bH_L \br_i  \|_\bW = \| \br_i  \|_\bH$ in \eqref{eq:minHMri}. Particular cases when this occurs include the following:
\begin{itemize}
\item Left preconditioning by hpd $\bH$ and $\bW = \bH^{-1}$, 
\item Right preconditioning by hpd $\bH$ and $\bW = \bH$, 
\item \revised{Split preconditioning by $\bH_L^*$ on the right, $\bH_L$ on the left, and $\bW = \matid$.} 
\end{itemize}

The idea to apply GMRES in the inner product induced by the preconditioner has also been explored by \cite{zbMATH01201042,zbMATH07860856,zbMATH01096035}. Moreover the authors of \cite{zbMATH06290704} propose other combined choices of preconditioner and weighted inner product.

Let $\bM$ and $\bN$ denote respectively the Hermitian and skew-Hermitian parts of $\bA$:
\begin{equation}
\label{eq:splitA}
\bA = \bM + \bN, \, \bM := \frac{\bA + \bA^*}{2} \text{ and } \bN := \frac{\bA - \bA^*}{2} \cdot
\end{equation}

The eigenvalues of $\bH \bM$ are real and positive as a result of $\bH$ and $\bM$ 
\revised{being hpd}. The eigenvalues of $\bN \bH$ and of $\bM^{-1}\bN$ are purely imaginary as a result of $\bH$ and $\bM$ being hpd, and of $\bN$ begin skew-Hermitian. Moreover, the non-zero eigenvalues of $\bN \bH$ and of $\bM^{-1}\bN$ come in complex-conjugate pairs. In what follows, $\lambda_{\min}(\bH\bM), \lambda_{\max}(\bH \bM) \in \mathbb R^+ \setminus\{ 0 \}$ denote the extreme eigenvalues of $\bH\bM$, and $\rho(\cdot)$ denotes the spectral radius of a matrix (the modulus of the eigenvalue of largest modulus). With these assumptions and notation, 
\begin{align*}
FOV^\bH(\bA \bH) &=  \left\{ \frac{\langle \bH \bA \bH \bz, \bz\rangle}{\langle \bH \bz, \bz\rangle}; \, \bz\in \mathbb C^n\setminus\{0\} \right\}\\
&  \subset \underbrace{\left\{ \frac{\langle \bH \bM \bH \bz, \bz\rangle}{\langle \bH \bz, \bz\rangle}; \, \bz\in \mathbb C^n\setminus\{0\} \right\}}_{\revised{\subset} \mathbb R} +  \underbrace{\left\{ \frac{\langle \bH \bN \bH \bz, \bz\rangle}{\langle \bH \bz, \bz\rangle}; \, \bz\in \mathbb C^n\setminus\{0\} \right\}}_{\revised{\subset} i  \mathbb R}\\
 &  = {\left\{ \frac{\langle \bM \bz, \bz\rangle}{\langle \bH^{-1} \bz, \bz\rangle}; \, \bz\in \mathbb C^n\setminus\{0\} \right\}} +  {\left\{ \frac{\langle \bN \bz, \bz\rangle}{\langle \bH^{-1} \bz, \bz\rangle}; \, \bz\in \mathbb C^n\setminus\{0\} \right\}}.
\end{align*}
\revised{
Taking $\bH^{1/2}$ to be the square root of $\bH$ and  $\by = \bH^{-1/2} \bz$, the fractions can be rewritten as
\[
\frac{\langle \bH^{1/2} \bM \bH^{1/2} \by, \by\rangle}{\langle \by, \by\rangle} \text{ and }  \frac{\langle \bH^{1/2} \bN  \bH^{1/2} \by, \by\rangle}{\langle \by, \by\rangle}, 
\]
where we recognize Rayleigh quotients for the hpd matrix $\bH^{1/2} \bM \bH^{1/2}$ and the skew-Hermitian matrix $\bH^{1/2} \bN \bH^{1/2}$. Since these matrices are similar, respectively, to $\bH \bM$ and $\bH \bN$, it can be concluded that  
\begin{equation}
\label{Omega_best_inclusion:eq}
FOV^\bH(\bA \bH) \subset  [ \lambda_{\min} (\bH \bM), \lambda_{\max} (\bH \bM) ] + i [-  \rho(\bN \bH) , \rho(\bN \bH) ].
\end{equation}}


We have thus presented a rectangle in $\mathbb C$ that contains the field of values over which the
min-max problem is defined.
Another rectangle can be obtained by a technique used 
in our two previous works \cite{zbMATH07860856,zbMATH07931226}, namely, multiply the fraction in the imaginary term by $\displaystyle{\frac{\langle \bM \bz, \bz\rangle}{\langle \bM \bz, \bz\rangle} \cdot}$
Then, for any $\bz \in \mathbb C^n \setminus \{0\}$, by a similar argument
\begin{equation} \label{Omega_inclusion:eq}
\frac{\langle \bN \bz, \bz\rangle}{\langle \bH^{-1} \bz, \bz\rangle} = \frac{\langle \bN \bz, \bz\rangle}{\langle \bM \bz, \bz\rangle} \times \frac{\langle \bM \bz, \bz\rangle}{\langle \bH^{-1} \bz, \bz\rangle} \in i [- \rho(\bM^{-1} \bN)\lambda_{\max}(\bH \bM), \rho(\bM^{-1} \bN)\lambda_{\max}(\bH \bM)] .
\end{equation}
This gives us the inclusion 
\[
FOV^\bH(\bA \bH) \subset [\lambda_{\min}(\bH \bM), \lambda_{\max}(\bH \bM)] + i  [- \rho(\bM^{-1} \bN)\lambda_{\max}(\bH \bM), \rho(\bM^{-1} \bN)\lambda_{\max}(\bH \bM)]. 
\]
\revised{It is necessarily less strict than the previous inclusion~\nref{Omega_best_inclusion:eq}. Indeed, the inclusion $\frac{\langle \bN \bz, \bz\rangle}{\langle \bH^{-1} \bz, \bz\rangle} \in  i [- \rho(\bH \bN), \rho(\bH \bN)]$ is tight, whereas~\nref{Omega_inclusion:eq} is in general not (each term in the product having been bounded independently).} 
 We summarize these findings in a theorem. 

\begin{theorem}
\label{th:CP}
Consider $\bW$-weighted GMRES applied to $\bA \bx = \rhs$ and preconditioned by $(\bH_L,\bH_R)$. Under the three conditions that $\bA$ is pd, $\bH:=\bH_R \bH_L$ is hpd and $\bH = \bH_L^* \bW \bH_L$, 
 the $k$-th residual is bounded by
\[
\frac{\|\br_k\|_\bH}{\|\br_0 \|_\bH} \leq (1+ \sqrt 2) \min\limits_{q \in \mathbb P_k; \, q(0) = 1} \max\limits_{z \in FOV^\bH(\bA\bH)} |q(z)|; \quad FOV^\bH(\bA \bH) \subset \Omega_1 \subset \Omega_2,  
\]  
where
\begin{equation}
\label{eq:Omega1}
\Omega_1 :=  [ \lambda_{\min} (\bH \bM), \lambda_{\max} (\bH \bM) ] + i [-  \rho(\bN \bH) , \rho(\bN \bH) ]. 
\end{equation}
and
\begin{equation}
\label{eq:Omega2}
\Omega_2 :=  [\lambda_{\min}(\bH \bM), \lambda_{\max}(\bH \bM)] + i  [- \rho(\bM^{-1} \bN)\lambda_{\max}(\bH \bM), \rho(\bM^{-1} \bN)\lambda_{\max}(\bH \bM)]. 
\end{equation}
(Recall that $[\lambda_{\min} (\bH \bM),  \lambda_{\max} (\bH \bM)]$ is a real positive interval that contains all eigenvalues of $\bH\bM$, and $\rho(\cdot)$ denotes the spectral radius of a matrix.) 
\end{theorem}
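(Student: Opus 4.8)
The statement bundles three assertions: the residual bound, the inclusion $FOV^\bH(\bA\bH)\subset\Omega_1$, and the inclusion $\Omega_1\subset\Omega_2$. I would treat them in this order, since each is essentially a specialization or reorganization of the reasoning already developed in this subsection. For the residual bound, I would simply instantiate Theorem~\ref{th:CPbound} under the three hypotheses. Since $\bH=\bH_L^*\bW\bH_L$, the weighted field of values collapses, $FOV^\bW(\bH_L\bA\bH_R)=FOV^{\bH_L^*\bW\bH_L}(\bA\bH)=FOV^\bH(\bA\bH)$, and the weighted residual norm satisfies $\|\bH_L\br_i\|_\bW^2=\langle\bW\bH_L\br_i,\bH_L\br_i\rangle=\langle\bH\br_i,\br_i\rangle=\|\br_i\|_\bH^2$. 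Substituting both identities into the bound of Theorem~\ref{th:CPbound} yields the claimed inequality, with no new estimate required.

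The inclusion $FOV^\bH(\bA\bH)\subset\Omega_1$ is exactly the computation carried out in the lines preceding the theorem and summarized in \nref{Omega_best_inclusion:eq}, which I would record as follows. Writing $\bA=\bM+\bN$ and using subadditivity of the field of values, the Rayleigh quotient $\langle\bH\bA\bH\bz,\bz\rangle/\langle\bH\bz,\bz\rangle$ splits into a real contribution from $\bM$ and an imaginary contribution from $\bN$. After the substitution $\by=\bH^{-1/2}\bz$ these become the Rayleigh quotients of $\bH^{1/2}\bM\bH^{1/2}$ and $\bH^{1/2}\bN\bH^{1/2}$. The first matrix is hpd and similar to $\bH\bM$, so its Rayleigh quotients fill $[\lambda_{\min}(\bH\bM),\lambda_{\max}(\bH\bM)]$; the second is skew-Hermitian and similar to $\bN\bH$, so its numerical range is the symmetric imaginary interval $i[-\rho(\bN\bH),\rho(\bN\bH)]$. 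Summing the two intervals gives $\Omega_1$ of \eqref{eq:Omega1}.

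For $\Omega_1\subset\Omega_2$, both rectangles share the real interval $[\lambda_{\min}(\bH\bM),\lambda_{\max}(\bH\bM)]$, so the containment reduces to the imaginary intervals, that is, to the inequality $\rho(\bN\bH)\le\rho(\bM^{-1}\bN)\,\lambda_{\max}(\bH\bM)$. I would obtain this from two descriptions of the same set $\{\langle\bN\bz,\bz\rangle/\langle\bH^{-1}\bz,\bz\rangle\}$: being the numerical range of the skew-Hermitian matrix $\bH^{1/2}\bN\bH^{1/2}$, it equals $i[-\rho(\bN\bH),\rho(\bN\bH)]$ exactly, with the endpoints $\pm i\rho(\bN\bH)$ attained, while \nref{Omega_inclusion:eq} shows every element of this set also lies in $i[-\rho(\bM^{-1}\bN)\lambda_{\max}(\bH\bM),\rho(\bM^{-1}\bN)\lambda_{\max}(\bH\bM)]$. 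Comparing the attained endpoint against this coarser bound delivers the inequality, hence the interval containment.

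The main obstacle is precisely this last step. Its clean justification hinges on recognizing that the imaginary-part set is the \emph{full} symmetric interval $i[-\rho(\bN\bH),\rho(\bN\bH)]$, so that its endpoint is genuinely attained, and then reading \nref{Omega_inclusion:eq} as a coarser bound on the \emph{same} set, obtained by estimating the factors $\langle\bN\bz,\bz\rangle/\langle\bM\bz,\bz\rangle$ and $\langle\bM\bz,\bz\rangle/\langle\bH^{-1}\bz,\bz\rangle$ separately; since the two factors are bounded independently, the resulting interval is in general strictly larger, which is what makes $\Omega_1$ the sharper enclosure. Everything else is routine bookkeeping on numerical ranges and similarity transformations.
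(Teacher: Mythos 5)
Your proposal is correct and follows essentially the same route as the paper: specializing Theorem~\ref{th:CPbound} via $\|\bH_L\br_i\|_\bW=\|\br_i\|_\bH$ and $FOV^\bW(\bH_L\bA\bH_R)=FOV^\bH(\bA\bH)$, the substitution $\by=\bH^{-1/2}\bz$ with the similarity of $\bH^{1/2}\bM\bH^{1/2}$ and $\bH^{1/2}\bN\bH^{1/2}$ to $\bH\bM$ and $\bN\bH$ for \nref{Omega_best_inclusion:eq}, and the multiply-and-divide by $\langle\bM\bz,\bz\rangle$ trick in \nref{Omega_inclusion:eq} for $\Omega_2$. Your endpoint-attainment argument for $\Omega_1\subset\Omega_2$ just makes rigorous what the paper disposes of in one remark (that the inclusion $\frac{\langle\bN\bz,\bz\rangle}{\langle\bH^{-1}\bz,\bz\rangle}\in i[-\rho(\bN\bH),\rho(\bN\bH)]$ is tight while \nref{Omega_inclusion:eq} is not), so it is a welcome elaboration rather than a different approach.
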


These bounds depend on
\begin{itemize}
\item how well the Hermitian part $\bM$ of $\bA$ is preconditioned by $\bH$ \textit{via} $\lambda_{\min}(\bH \bM)$ and $\lambda_{\max}(\bH \bM)$,
\item and \begin{itemize} 
\item either, on how well the skew-Hermitian part $\bN$ of $\bA$ is preconditioned by $\bH$  \textit{via} $ \rho(\bN \bH)$, 
\item or, on how non-Hermitian the problem is, \textit{via}, $\rho(\bM^{-1} \bN)$.
\end{itemize}
\end{itemize} 

We defer the solution of the min-max problem on $\Omega_1$ and $ \Omega_2$ to Section~\ref{sec:minmaxrec}. First we consider deflated GMRES.  

\section{Deflated GMRES} \label{deflated:sec}

\subsection{The general case} \label{general.deflated:sec}

We temporarily relax the assumptions from the previous subsection. Let $\bA$, $\bH_R$, $\bH_L$ be non-singular 
$n \times n$ matrices, $\bW$ be hpd, and  $\bH := \bH_R \bH_L$. 

\begin{definition}
\label{def:PDQD}
Let $\bY, \bZ \in \mathbb C^{n\times m}$ be two full rank matrices. Under the assumption that $\ker(\bY^*) \cap \range(\bA \bZ) = \{ \mathbf{0}\}$, let
\begin{equation}
\label{eq:defPD}
\bP_D := \matid - \bA \bZ(\bY^* \bA \bZ)^{-1} \bY^* \text{ and }  \bQ_D := \matid - \bZ (\bY^* \bA \bZ)^{-1}\bY^* \bA.
\end{equation}
These are projection operators called the deflation operators.  
\end{definition}
In \cite{SdSK.2020-GAMM}, the projectors  $\bP_D$ and $\bQ_D$ are called
\textit{sibling projectors}.
What is meant is that defining one also defines the other unambiguously. Deflated GMRES is the application of GMRES to the singular system $\bP _D \bA \bx = \bP_D \bb$. 
 Deflation can be applied simultaneously with preconditioning by $(\bH_L, \bH_R)$ in which case GMRES is applied to 
\[
\bH_L \bP _D \bA \bH_R \bu = \bH_L \bP_D \bb; \quad  \bx = \bH_R \bu. 
\]
Assume that $\bY^* \bH^{-1} \bZ$ is non-singular, then GMRES does not break down; see \cite[Theorem 3.5]{zbMATH07152809} for left preconditioning and \cite[Theorem 3.2]{zbMATH07931226} for right preconditioning. This property remains true for GMRES in any weighted inner product. Moreover, the residuals of $\bW$-weighted and $\bP_D$-deflated GMRES applied to $\bA \bx = \bb$  with preconditioning by $(\bH_L, \bH_R)$  follow the characterization from Theorem~\ref{th:cv}, \textit{i.e.}, $\br_k = \bP_D(\bb - \bA \bx_k)$, satisfies
\begin{align*}
\| \bH_L \br_k  \|_\bW  & = \min\limits_{q \in \mathbb P_k; \, q(0) = 1} \{ \|\bH_L q(\bP_D \bA \bH) \br_0 \|_{\bW} \}\\
& = \min\limits_{q \in \mathbb P_k; \, q(0) = 1} \{ \|q(\bP_D \bA \bH) \bP_D \br_0 \|_{\bH_L^* \bW \bH_L } \} \\ 
&\leq \min\limits_{q \in \mathbb P_k; \, q(0) = 1} \{ \|q(\bP_D \bA \bH) \bP_D\|_{\bH_L^* \bW \bH_L} \}\|\br_0 \|_{\bH_L^* \bW\bH_L } \\ 
&= \ \min\limits_{q \in \mathbb P_k; \, q(0) = 1} \{ \| q(\bP_D \bA \bH \bP_D)\|_{\bH_L^*\bW\bH_L} \}\|\bH_L \br_0 \|_{\bW} .
\end{align*}

If we applied the Crouzeix-Palencia bound of Theorem~\ref{th:CP} directly to  $\| q(\bP_D \bA \bH \bP_D )\|_{\bH_L^*\bW\bH_L}$, the min-max problem would be posed over the field of values of the singular operator $\bP_D \bA \bH \bP_D$, which includes $0$. This is not useful, because of the constraint that $q(0) = 1$.  Instead we apply \eqref{CP:lemmaHilbert} in the Hilbert space $(\range(\bP_D), \langle , \rangle_{\bH_L^*\bW\bH_L}, \| \|_{\bH_L^*\bW\bH_L})$ to the (bounded linear) operator in $\range(\bP_D)$ represented by the matrix $\bP_D \bA \bH \bP_D$. This gives, for any polynomial $q$, that 
\[
\|q(\bP_D \bA \bH \bP_D )\|_{\bH_L^* \bW \bH_L}\leq (1 + \sqrt 2)  \sup\limits_{\revised{z} \in FOV^{\bH_L^* \bW \bH_L}(\bP_D \bA \bH_{|\range(\bP_D)})} |q(z)|,
\]
where the $\bW$-field of values of any matrix $\bB \in \mathbb C^{n \times n}$ restricted to $V \subset \mathbb C^n$ is defined by 
\[
FOV^\bW(\bB_{|V}) := \left\{ \frac{\langle \bB \bz, \bz\rangle_\bW}{\langle \bz, \bz\rangle_\bW}; \, \bz\in V \right\}, \quad \text{ for } V \subset \mathbb C^n. 
\]
in agreement with the definition in \cite[page 268]{zbMATH06846491}. 

\begin{theorem} 
\label{th:cvdefGMRES}
Let $\bP_D$ be defined by \eqref{eq:defPD}, and assume that the two following conditions hold
\begin{equation} \label{eq:PD.conditions}
\ker(\bY^*) \cap \range(\bA \bZ) = \{ \mathbf{0}\} \text{ and }\operatorname{ker}( \bY^*  ) \cap \operatorname{range}(\bH^{-1} \bZ)= \{ \mathbf{0}\}.
\end{equation}
Then, the $k$-th residual of $\bW$-weighted GMRES applied to $\bA \bx = \rhs$, preconditioned by $(\bH_L,\bH_R)$ and deflated by $\bP_D$ is bounded by
\[
\frac{\|\br_k\|_{\bH_L^* \bW \bH_L} }{\| \br_0 \|_{\bH_L^* \bW \bH_L} } \leq  (1 + \sqrt 2)  \min\limits_{q \in \mathbb P_k; \, q(0) = 1}   \max_{z \in FOV^{\bH_L^* \bW \bH_L}\left(\bP_D \bA \bH _{| \range( \bP_D)}\right)}  \{ |q(z)|\}   , 
\]
where again $\br_k = \bP_D(\bb - \bA \bx_k)$ and  $\bH = \bH_R \bH_L$.
\end{theorem}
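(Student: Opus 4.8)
The approach is to assemble the chain of (in)equalities displayed just before the statement with the restricted Crouzeix--Palencia estimate, the only genuinely delicate point being the passage to the invariant subspace $\range(\bP_D)$. First I would record the elementary isometry $\|\bH_L\bv\|_\bW=\|\bv\|_{\bH_L^*\bW\bH_L}$, valid for every $\bv\in\mathbb C^n$ since $\|\bH_L\bv\|_\bW^2=\langle\bW\bH_L\bv,\bH_L\bv\rangle=\langle\bH_L^*\bW\bH_L\bv,\bv\rangle$. This identifies the two norms in the statement, so it suffices to bound $\|\bH_L\br_k\|_\bW/\|\bH_L\br_0\|_\bW$.

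Next I would use the minimization property of deflated, weighted, preconditioned GMRES (the deflated analogue of Theorem~\ref{th:cv}, available here because the non-breakdown hypothesis $\ker(\bY^*)\cap\range(\bH^{-1}\bZ)=\{\mathbf0\}$ in \eqref{eq:PD.conditions} holds), which gives $\br_k=\bP_D(\bb-\bA\bx_k)$ and the polynomial representation $\|\bH_L\br_k\|_\bW=\min_{q(0)=1}\|q(\bP_D\bA\bH)\br_0\|_{\bH_L^*\bW\bH_L}$ with $\br_0\in\range(\bP_D)$. The key observation is then that $\range(\bP_D)$ is invariant under $\bP_D\bA\bH$ and that, on this subspace, $\bP_D\bA\bH$ coincides with $\bP_D\bA\bH\bP_D$, because $\bP_D\bz=\bz$ for $\bz\in\range(\bP_D)$. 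Writing $B$ for the operator on $\range(\bP_D)$ induced by $\bP_D\bA\bH\bP_D$, we get $q(\bP_D\bA\bH)\br_0=q(B)\br_0$, whence, by submultiplicativity of the induced operator norm on $\range(\bP_D)$,
\[
\|\bH_L\br_k\|_\bW\;\le\;\min_{q\in\mathbb P_k,\,q(0)=1}\|q(B)\|_{\bH_L^*\bW\bH_L}\,\|\bH_L\br_0\|_\bW .
\]

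The crux is to bound $\|q(B)\|$ by Crouzeix--Palencia. One cannot apply Theorem~\ref{th:CP} to the matrix $\bP_D\bA\bH\bP_D$ acting on all of $\mathbb C^n$: that matrix is singular, so $0$ lies in its field of values, and since $q(0)=1$ the corresponding maximum is $\ge1$ and carries no information. The remedy, and the reason for working on $\range(\bP_D)$ throughout, is that the spurious contribution of $\ker(\bP_D)$ is discarded: $B$ is a genuine bounded linear operator on the finite-dimensional Hilbert space $(\range(\bP_D),\langle\cdot,\cdot\rangle_{\bH_L^*\bW\bH_L})$, to which \eqref{CP:lemmaHilbert} applies directly, giving for every $q$
\[
\|q(B)\|_{\bH_L^*\bW\bH_L}\;\le\;(1+\sqrt2)\sup_{z\in FOV^{\bH_L^*\bW\bH_L}\left(\bP_D\bA\bH_{|\range(\bP_D)}\right)}|q(z)| .
\]
Here the field of values is exactly the one in the statement, since on $\range(\bP_D)$ the Rayleigh quotients of $B$ and of $\bP_D\bA\bH$ agree.

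Finally I would substitute this estimate into the previous inequality, take the minimum over $q\in\mathbb P_k$ with $q(0)=1$, divide through by $\|\bH_L\br_0\|_\bW=\|\br_0\|_{\bH_L^*\bW\bH_L}$, and replace the $\sup$ by a $\max$, the latter being justified because we maximize the continuous function $|q|$ over the compact convex set $FOV^{\bH_L^*\bW\bH_L}(\bP_D\bA\bH_{|\range(\bP_D)})$ (Hausdorff--Toeplitz). I expect the main obstacle to be precisely this third step: making rigorous that the singular $n\times n$ matrix $\bP_D\bA\bH\bP_D$ should be read as an operator on the invariant subspace $\range(\bP_D)$ --- establishing invariance, the coincidence of $\bP_D\bA\bH$ and $\bP_D\bA\bH\bP_D$ there, and consequently that the relevant field of values is the restricted one, so that the otherwise useless eigenvalue $0$ is excluded.
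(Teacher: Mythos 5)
Your proposal is correct and follows essentially the same route as the paper: the identical chain $\|\bH_L \br_k\|_\bW = \min_{q(0)=1}\|q(\bP_D\bA\bH)\br_0\|_{\bH_L^*\bW\bH_L} \leq \min_{q(0)=1}\|q(\bP_D\bA\bH\bP_D)\|_{\bH_L^*\bW\bH_L}\,\|\br_0\|_{\bH_L^*\bW\bH_L}$, followed by applying the Crouzeix--Palencia result \eqref{CP:lemmaHilbert} in the Hilbert space $(\range(\bP_D),\langle\cdot,\cdot\rangle_{\bH_L^*\bW\bH_L})$ precisely so that the spurious eigenvalue $0$ of the singular matrix on all of $\mathbb{C}^n$ is excluded, and finishing by replacing the $\sup$ with a $\max$ over the compact field of values. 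Your explicit verification that $\range(\bP_D)$ is invariant, that $\bP_D\bA\bH$ and $\bP_D\bA\bH\bP_D$ coincide there, and that $\br_0\in\range(\bP_D)$ makes rigorous a step the paper leaves implicit, but it is the same argument.
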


It remains to characterize the weighted, preconditioned and deflated field of values. 

\subsection{The case  when $\bA$ is pd and $\bH = \bH_L^* \bW \bH_L$ and $\bY = \bH \bA \bZ$} 

This is the counterpart of Section~\ref{sub:ApdHnorm} for the deflated case. Assume that $\bA$ is pd, $\bH = \bH_R \bH_L$ is hpd, and that the residual norm which is minimized (also the norm for the FOV of interest) is equal to the combined preconditioner $\bH$, \textit{i.e.}, 
$\bH_L^* \bW \bH_L = \bH$.  
 Under these condition, the min-max problem in Theorem~\ref{th:cvdefGMRES} is posed over 
\begin{equation}
\label{eq:temp}
FOV^{\bH}\left(\bP_D \bA \bH_{| \range( \bP_D)}\right)  =  \left\{ \frac{\langle \bH \bP_D \bA \bH  \bx , \bx \rangle}{\langle \bH \bx, \bx\rangle}; \bx \in \range( \bP_D) \right\}.  
\end{equation}

As in our previous work \cite{zbMATH07931226}, we assume that $\bY = \bH \bA \bZ$ in order to have a projection $\bP_D$ that is $\bH$-orthogonal,
or equivalently  
\revised{self-adjoint with respect to the 
$\bH$-inner product,
meaning that $ \bH \bP_D =  \bP_D^* \bH$,} and then
\begin{align}
FOV^{\bH}\left(\bP_D \bA \bH _{| \range( \bP_D)}\right) &=  \left\{ \frac{\langle \bH \bA \bH  \bx , \bx \rangle}  {\langle \bH \bx, \bx\rangle}; \bx \in \range( \bP_D) \right\} \notag \\ 
&\subset \underbrace{\left\{ \frac{\langle \bH \bM \bH  \bx , \bx \rangle}  {\langle \bH \bx, \bx\rangle}; \bx \in \range( \bP_D) \right\}}_{\subset \mathbb R} + \underbrace{\left\{ \frac{\langle \bH \bN \bH  \bx , \bx \rangle}  {\langle \bH \bx, \bx\rangle}; \bx \in \range( \bP_D) \right\}}_{\subset i \mathbb R} \label{eq:temp2}.
\end{align}

The plan is to assume that $\bH$ is already a good preconditioner for $\bM$ and use deflation to bound $\displaystyle{\frac{\langle \bH \bN \bH  \bx , \bx \rangle}  {\langle \bH \bx, \bx\rangle}}$ on $\range(\bP_D)$.

\begin{remark}
Equations~\eqref{eq:temp} and~\eqref{eq:temp2} hold without the assumption that $\bA$ is positive definite but the field of values may contain $0$, \textit{e.g.}, if both $\bA$ and $- \bA$ are not pd. 
\end{remark}

\begin{remark}
The combination of $\bA$ being pd and $\bY = \bH \bA \bZ$ ensures that the two conditions \eqref{eq:PD.conditions} in Theorem~\ref{th:cvdefGMRES} are satisfied as long as $\bY$ is full rank. 
\end{remark}

We will now consider two different deflation spaces.

\subsubsection{Spectral deflation space based on $\bH \bN$}
\label{sec:firstDS}
  
\begin{theorem}
\label{th:gevpHN}
Consider $\bW$-weighted GMRES applied to $\bA \bx = \rhs$, preconditioned by $(\bH_L,\bH_R)$ and deflated by $\bP_D$ (defined by~\eqref{eq:defPD}). We make the four assumptions that $\bA$ is pd, $\bH := \bH_R \bH_L$ is hpd, $\bH = \bH_L^* \bW \bH_L$ and $\bY = \bH \bA \bZ$. 

Moreover, let $(\lambda^{(k)}, \bx^{(k)})_{1 \leq k \leq n}$ be the eigenpairs of the generalized eigenvalue problem
\begin{equation}
\label{eq:gevpHN}
\bN \bx^{(k)} = \lambda^{(k)} \bH^{-1} \bx^{(k)}.
\end{equation}
If, for a given $\tau >0$, the columns of $\bY$ are set to be the vectors $\{\bx^{(k)} ; |\lambda^{(k)}| > \tau \}$, then the $k$-th residual is bounded by
\[
\frac{\|\br_k\|_\bH}{\|\br_0 \|_\bH} \leq (1+ \sqrt 2) \min\limits_{q \in \mathbb P_k; \, q(0) = 1} \max\limits_{z \in \Omega_1^\tau} | q(z)|,  
\]  
where
\begin{equation}
\label{eq:Omega1tau}
\Omega_1^\tau :=  [\lambda_{\min}(\bH \bM), \lambda_{\max}(\bH \bM)] + i  [- \tau, \tau]. 
\end{equation}
(Recall that $[\lambda_{\min} (\bH \bM),  \lambda_{\max} (\bH \bM)]$ is the real positive interval that contains all eigenvalues of $\bH\bM$). 
\end{theorem}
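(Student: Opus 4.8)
The starting point is the inclusion \eqref{eq:temp2}, which under the present hypotheses already splits the problem into controlling separately the real part (the Rayleigh quotient built from $\bM$) and the imaginary part (the one built from $\bN$), each taken over $\range(\bP_D)$. The plan is to show that, with this particular choice of $\bY$, one has $FOV^{\bH}(\bP_D \bA \bH_{|\range(\bP_D)}) \subset \Omega_1^\tau$, and then to invoke Theorem~\ref{th:cvdefGMRES} verbatim. So everything reduces to bounding the two sets in \eqref{eq:temp2}.

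The real part requires no new work. The set $\{\langle \bH\bM\bH\bx,\bx\rangle / \langle \bH\bx,\bx\rangle : \bx \in \range(\bP_D)\}$ is contained in the same set taken over all of $\mathbb C^n\setminus\{0\}$, and the latter was shown in Section~\ref{sub:ApdHnorm} (via $\by = \bH^{1/2}\bx$, recognizing a Rayleigh quotient of the hpd matrix $\bH^{1/2}\bM\bH^{1/2}$, similar to $\bH\bM$) to lie in $[\lambda_{\min}(\bH\bM), \lambda_{\max}(\bH\bM)]$. Restricting to $\range(\bP_D)$ can only shrink the image, so the real part already sits inside the required interval: deflation neither helps nor harms it, which matches the intuition that $\bH$ is assumed to precondition $\bM$ well.

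The crux is the imaginary part, and here I would use two compatible changes of variables. Setting $\bS := \bH^{1/2}\bN\bH^{1/2}$, the substitution $\bw = \bH^{1/2}\bx$ turns $\langle \bH\bN\bH\bx,\bx\rangle / \langle \bH\bx,\bx\rangle$ into the ordinary Rayleigh quotient $\langle \bS\bw,\bw\rangle / \langle \bw,\bw\rangle$ of the skew-Hermitian matrix $\bS$. The same substitution applied to the generalized eigenproblem \eqref{eq:gevpHN}, namely $\by^{(k)} = \bH^{-1/2}\bx^{(k)}$, shows that the $\by^{(k)}$ are eigenvectors of $\bS$ with the purely imaginary eigenvalues $\lambda^{(k)}$; since $\bS$ is skew-Hermitian, they may be taken orthonormal in the standard inner product. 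I would then identify $\range(\bP_D) = \ker(\bY^*)$ (immediate from $\bP_D$ being a projection with $\matid - \bP_D = \bA\bZ(\bY^*\bA\bZ)^{-1}\bY^*$ injective off $\ker(\bY^*)$) and verify the compatibility identity $\langle \bx,\bx^{(k)}\rangle = \langle \bw,\by^{(k)}\rangle$, a one-line adjoint computation using $(\bH^{-1/2})^*\bH^{1/2} = \matid$. Consequently, $\bx \in \range(\bP_D)$ forces the expansion $\bw = \sum_k c_k \by^{(k)}$ to satisfy $c_k = 0$ whenever $|\lambda^{(k)}| > \tau$.

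The conclusion is then immediate: the imaginary Rayleigh quotient equals $\sum_k \lambda^{(k)}|c_k|^2 / \sum_k |c_k|^2$, a convex combination of the retained eigenvalues $\lambda^{(k)}$, all purely imaginary with $|\lambda^{(k)}| \le \tau$; such a combination lies in $i[-\tau,\tau]$. Combining with the real-part bound gives $FOV^{\bH}(\bP_D\bA\bH_{|\range(\bP_D)}) \subset \Omega_1^\tau$, and Theorem~\ref{th:cvdefGMRES} delivers the stated estimate. The only real obstacle I anticipate is bookkeeping: keeping the two changes of variables straight ($\bH^{1/2}$ for the Rayleigh quotient, $\bH^{-1/2}$ for the eigenvectors) so that standard-inner-product orthogonality to the columns of $\bY$ genuinely becomes standard-inner-product orthogonality to the eigenbasis of $\bS$. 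Once that compatibility identity is secured, the spectral argument is routine.
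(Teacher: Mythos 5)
Your proof is correct and follows essentially the same route as the paper's: both start from \eqref{eq:temp2}, bound the real part by the undeflated Rayleigh-quotient estimate, and use the fact that $\range(\bP_D)=\ker(\bY^*)$ is orthogonal to the deflated eigenvectors to confine the skew part to $i[-\tau,\tau]$. The only difference is notational: you symmetrize via $\bH^{1/2}$ and expand in the standard orthonormal eigenbasis of the skew-Hermitian matrix $\bH^{1/2}\bN\bH^{1/2}$, writing out the convex-combination step explicitly, whereas the paper performs the equivalent change of variables $\bx \mapsto \bH\bx$ and argues with the $\bH^{-1}$-orthonormal eigenbasis directly, delegating that step to its cited Lemma~2.2.
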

\begin{proof}
Since $\bN$ is skew-Hermitian and $\bH^{-1}$ is Hermitian positive definite, the eigenvectors $\bx^{(k)}$ can be chosen to form a $\bH^{-1}$-orthonormal basis of $\mathbb C^n$ and $\{\bx^{(k)} ; |\lambda^{(k)}| > \tau \} \perp^{\bH^{-1}} \{\bx^{(k)} ; |\lambda^{(k)}| \leq \tau \}$ (see \cite[Lemma 2.2]{zbMATH07931226}). Consequently, $\range ( \bH \bP_D) = \operatorname{span} \{\bx^{(k)} ; |\lambda^{(k)}| \leq \tau \}$ since 
\[
\range (\bP_D) = \range(\bY)^\perp = \left(\operatorname{span} \{\bx^{(k)} ; |\lambda^{(k)}| > \tau \}\right)^\perp = \operatorname{span} \{\bH^{-1} \bx^{(k)} ; |\lambda^{(k)}| \leq \tau \}. 
\]
This way, in the application of Theorem~\ref{th:cvdefGMRES} with \eqref{eq:temp2}, the purely imaginary term is  
\[
\left\{ \frac{\langle \bH \bN \bH  \bx , \bx \rangle}  {\langle \bH \bx, \bx\rangle}; \bx \in \range( \bP_D) \right\} = \left\{ \frac{\langle \bN  \bx , \bx \rangle}  {\langle \bH^{-1} \bx, \bx\rangle}; \bx \in \range( \bH \bP_D)  \right\} \subset i [-\tau, \tau], 
\]
and the min-max problem can indeed be solved over $\Omega_1^\tau$.  
\end{proof}

\paragraph{Practical limitation.} 
We have assumed that $\bY = \bH \bA \bZ$ in order to make sure
\eqref{eq:PD.conditions} holds. Thus, in order to set up $\bP_D$, we need to compute $\bA^{-1} \bH^{-1} \bY $, or a different basis that spans the same space. By definition of $\bY$, $\range(\bH^{-1} \bY) = \range(\bN \bY)$. If $\bN$ is non-singular: $\bA^{-1} = (\bN (\matid + \bN^{-1} \bM))^{-1} = (\matid + \bN^{-1} \bM)^{-1} \bN^{-1}$ so that
\[
\bA^{-1} \bH^{-1} \range( \bY ) = (\matid + \bN^{-1} \bM)^{-1} \range( \bY). 
\]
It is not clear that we can efficiently compute the space spanned by the columns of $\bZ = (\matid + \bN^{-1} \bM)^{-1} \bY$.
We believe this to be a technical assumption that is not essential for the efficiency of the method. In 
the numerical results in Section~\ref{sec:numerical} we propose setting $\bZ = \bY$ or $\bZ = \bN \bY$. These choices do not satisfy $\bY = \bH\ \bA \bZ$ but they do satisfy the two conditions 
\eqref{eq:PD.conditions} in Theorem 2.1.

\subsubsection{Spectral deflation space based on $\bM^{-1} \bN$}
\label{sec:secondDS}

Another option is to compute the spectral deflation space from our previous work \cite{zbMATH07931226}.

\begin{theorem}
\label{th:gevpMinvN}
Consider $\bW$-weighted GMRES applied to $\bA \bx = \rhs$, preconditioned by $(\bH_L,\bH_R)$ and deflated by $\bP_D$ (defined by~\eqref{eq:defPD}). We make the four assumptions that $\bA$ is pd, $\bH := \bH_R \bH_L$ is hpd, $\bH = \bH_L^* \bW \bH_L$ and $\bY = \bH \bA \bZ$. 
Moreover, let $(\lambda^{(k)}, \bx^{(k)})_{1 \leq k \leq n}$ be the eigenpairs of the generalized eigenvalue problem
\begin{equation}
\label{eq:gevpMinvN}
\bN \bx^{(k)} = \lambda^{(k)} \bM \bx^{(k)}.
\end{equation}
If for a given $\tau >0$, the columns of $\bZ$ are set to be the vectors $\{\bx^{(k)} ; |\lambda^{(k)}| > \tau \}$, then the $k$-th residual is bounded by
\[
\frac{\|\br_k\|_\bH}{\|\br_0 \|_\bH} \leq (1+ \sqrt 2) \min\limits_{q \in \mathbb P_k; \, q(0) = 1} \max\limits_{z \in \Omega_2^\tau} | q(z)|,  
\]  
where
\begin{equation}
\label{eq:Omega2tau}
\Omega_2^\tau = [\lambda_{\min}(\bH \bM); \lambda_{\max}(\bH\bM) ] + i \lambda_{\max}(\bH\bM) [- \tau; \tau ]. 
\end{equation}
\end{theorem}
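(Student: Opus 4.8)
My plan mirrors the proof of Theorem~\ref{th:gevpHN}, but the deflation vectors now prescribe $\bZ$ (rather than $\bY$), so translating the spectral choice into a statement about $\range(\bH\bP_D)$ requires an extra algebraic step. First I would record that, since $\bN$ is skew-Hermitian and $\bM$ is hpd, the eigenpairs of \eqref{eq:gevpMinvN} are those of $\bM^{-1}\bN$: the $\lambda^{(k)}$ are purely imaginary, and the $\bx^{(k)}$ may be chosen $\bM$-orthonormal (as in the $\bH^{-1}$-weighted case), so that $\spann\{\bx^{(k)}; |\lambda^{(k)}|>\tau\}$ and $\spann\{\bx^{(k)}; |\lambda^{(k)}|\leq\tau\}$ are $\bM$-orthogonal complements of each other. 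Writing $\Lambda$ for the diagonal matrix of the retained large eigenvalues, the relation $\bN\bZ=\bM\bZ\Lambda$ together with $\bN^*=-\bN$, $\bM^*=\bM$ and $\Lambda^*=-\Lambda$ gives $\bZ^*\bN=\Lambda\bZ^*\bM$, hence $\bZ^*\bA^*=\bZ^*(\bM-\bN)=(\matid-\Lambda)\bZ^*\bM$.

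The key step is to show $\range(\bH\bP_D)=\spann\{\bx^{(k)}; |\lambda^{(k)}|\leq\tau\}$. Because $\bY=\bH\bA\bZ$ makes $\bP_D$ $\bH$-self-adjoint ($\bH\bP_D=\bP_D^*\bH$), the matrix $\bH\bP_D$ is Hermitian, and with $\bH$ invertible this yields $\range(\bH\bP_D)=\range(\bP_D^*\bH)=\range(\bP_D^*)=(\ker\bP_D)^\perp=\range(\bA\bZ)^\perp=\ker(\bZ^*\bA^*)$, using $\ker\bP_D=\range(\bA\bZ)$. By the identity above and the invertibility of $\matid-\Lambda$ (each $1-\lambda^{(k)}\neq0$ since $\lambda^{(k)}\in i\mathbb R$), we get $\ker(\bZ^*\bA^*)=\ker(\bZ^*\bM)$, which is exactly the $\bM$-orthogonal complement of $\range(\bZ)=\spann\{\bx^{(k)}; |\lambda^{(k)}|>\tau\}$, i.e. the span of the discarded small eigenvectors. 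This is the step I expect to be the main obstacle: unlike in Theorem~\ref{th:gevpHN}, the deflation space is fixed through $\bZ$ and must be carried through both $\bA$ and $\bH$ to reach $\range(\bH\bP_D)$, and it is precisely the cancellation giving $\ker(\bZ^*\bA^*)=\ker(\bZ^*\bM)$ that makes the characterization come out clean.

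With the subspace identified, I would bound the imaginary term of the field of values in \eqref{eq:temp2}. Substituting $\bz=\bH\bx$ for $\bx\in\range(\bP_D)$ rewrites that term as $\langle\bN\bz,\bz\rangle/\langle\bH^{-1}\bz,\bz\rangle$ with $\bz\in\range(\bH\bP_D)$, and I would factor it exactly as in \eqref{Omega_inclusion:eq},
\[
\frac{\langle\bN\bz,\bz\rangle}{\langle\bH^{-1}\bz,\bz\rangle}=\frac{\langle\bN\bz,\bz\rangle}{\langle\bM\bz,\bz\rangle}\cdot\frac{\langle\bM\bz,\bz\rangle}{\langle\bH^{-1}\bz,\bz\rangle}.
\]
Expanding $\bz$ in the $\bM$-orthonormal small eigenvectors shows the first factor is a convex combination of the purely imaginary $\lambda^{(k)}$ with $|\lambda^{(k)}|\leq\tau$, hence lies in $i[-\tau,\tau]$; the second factor is a Rayleigh quotient of the hpd matrix $\bH^{1/2}\bM\bH^{1/2}$, hence lies in $[\lambda_{\min}(\bH\bM),\lambda_{\max}(\bH\bM)]$. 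Their product therefore lies in $i\lambda_{\max}(\bH\bM)[-\tau,\tau]$. Combining this with the real term of \eqref{eq:temp2}, which is the same Rayleigh quotient of $\bH^{1/2}\bM\bH^{1/2}$ and so lies in $[\lambda_{\min}(\bH\bM),\lambda_{\max}(\bH\bM)]$ independently of the subspace, gives $FOV^\bH(\bP_D\bA\bH_{|\range(\bP_D)})\subset\Omega_2^\tau$. Finally I would invoke Theorem~\ref{th:cvdefGMRES} to turn this inclusion into the stated residual bound.
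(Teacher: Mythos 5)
Your proposal is correct and follows essentially the same route as the paper's proof: the $\bM$-orthonormal eigenbasis splitting, the identification $\range(\bH\bP_D)=\range(\bA\bZ)^\perp=\range(\bM\bZ)^\perp=\spann\{\bx^{(k)};|\lambda^{(k)}|\leq\tau\}$, the factorization of the imaginary Rayleigh quotient into a convex combination of small eigenvalues times a Rayleigh quotient of $\bH^{1/2}\bM\bH^{1/2}$, and the final appeal to Theorem~\ref{th:cvdefGMRES}. Your explicit identity $\bZ^*\bA^*=(\matid-\Lambda)\bZ^*\bM$ with $\matid-\Lambda$ invertible is merely a slightly more detailed justification of the range equality that the paper asserts via $\range(\bA\bZ)\subset\range(\bM\bZ)$, not a different argument.
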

\begin{proof}
Since $\bN$ is skew-Hermitian and $\bM$ is Hermitian positive definite, the eigenvectors $\bx^{(k)}$ can be chosen to form an $\bM$-orthonormal basis of $\mathbb C^n$ and $\{\bx^{(k)} ; |\lambda^{(k)}| > \tau \} \perp^{\bM} \{\bx^{(k)} ; |\lambda^{(k)}| \leq \tau \}$ (see \cite[Lemma 2.2]{zbMATH07931226}). Moreover, it can be noticed that $\range(\bA \bZ) \subset \range(\bM \bZ) + \range(\bN \bZ) = \range(\bM \bZ)$ because the columns in $\bZ$ are eigenvectors of \eqref{eq:gevpMinvN}. 
 Consequently, 
\[
\range (\bP_D) = \range(\bY)^\perp =  \range(\bH \bA \bZ)^\perp ,
\]
and 
\[
\range ( \bH \bP_D) = \range(\bA \bZ)^\perp =  \range(\bM \bZ)^\perp =  \operatorname{span} \{\bx^{(k)} ; |\lambda^{(k)}| \leq \tau \}
\]
This way, in the application of Theorem~\ref{th:cvdefGMRES} with \eqref{eq:temp2}, the purely imaginary term is  
\begin{align*}
\left\{ \frac{\langle \bH \bN \bH  \bx , \bx \rangle}  {\langle \bH \bx, \bx\rangle}; \bx \in \range( \bP_D) \right\} 
&=\left\{ \frac{\langle \bH \bN \bH  \bx , \bx \rangle}{\langle \bH \bM \bH  \bx , \bx \rangle}  \frac{\langle \bH \bM \bH  \bx , \bx \rangle}{\langle \bH \bx, \bx\rangle}   ; \bx \in \range( \bP_D) \right\} \\ 
&=  \left\{ \frac{\langle \bN  \bx , \bx \rangle}{\langle \bM \bx , \bx \rangle}  \frac{\langle \bM  \bx , \bx \rangle}{\langle \bH^{-1} \bx, \bx\rangle}   ; \bx \in \range(\bH \bP_D) \right\} \\ 
&\subset  i \lambda_{\max}(\bH\bM) [- \tau; \tau ],  
\end{align*}
and the min-max problem can indeed be solved over $\Omega_2^\tau$.  
\end{proof}

\section{Solution of the min-max problem on a rectangle}
\label{sec:minmaxrec}
In our quest for improved bounds for GMRES,
we have arrived four times at the solution of the min-max problem over a rectangle of the complex plane.
Namely, 
$\Omega_1$ and  $\Omega_2$ in Theorem~\ref{th:CP}, $\Omega_1^\tau$ in Theorem~\ref{th:gevpHN}, and $\Omega_2^\tau$ in Theorem~\ref{th:gevpMinvN}.
In all these cases, the rectangle is both \revised{ on the right of the origin and symmetric around the real axis}.
In order to complete our GMRES convergence bounds, we address the solution of the min-max problem on such 
rectangular domains, \textit{i.e.}, the approximation of 
\begin{equation} \label{eq:minmaxOmega}
K_k(\Omega) := \min\limits_{q \in \mathbb P_k; \, q(0) = 1} \max\limits_{z \in \Omega} |q(z)|. 
\end{equation}
A well-known and easy to check property is that $K_k(\Omega)$  remains constant under \revised{dilation} of $\Omega$:  $K_k(\Omega) =  K_k(a \Omega)$ for any $a \in \mathbb C\setminus\{0\}$. 
Thus, without loss of generality, we can restrict ourselves to the case where 
\[
\Omega := [1, \mu] + i [-\rho, \rho], \text{ for } \mu, \, \rho \in \mathbb R^+. 
\]

We proceed by considering different approaches to find a bound for the min-max 
\mbox{problem \eqref{eq:minmaxOmega}.}

\paragraph{Elman bound.}
This is a linear bound first proved in \cite{zbMATH03831185,elman1982iterative}. For a pd matrix $\bB$ in a generic norm, the bound reads 
\[
K_k(\Omega) \leq \left[1 -\left( \frac{d(0,FOV(\bB))}{\|\bB\|} \right)^2  \right]^{k/2}, 
\]
where $d(0,FOV(\bB))$ is the distance between $0$ and the field of values of $\bB$. In our setting, we do not have an equivalent for $\|\bB\|$, however according to \cite[eq. (5.7.21)]{zbMATH06125590}), it is bounded from below with respect to $r(\bB):= \max \{ |z|; z \in FOV(\bB)\} $ (called the numerical radius of $\bB$) since
\[
r(\bB) \leq \|\bB\| \leq 2 r(\bB). 
\]
We normalize $\Omega$ with $d(0,FOV(\bB)) = d(0,\Omega)  = 1$ and $r(\bB) = \max \{ |z|; z \in \Omega \}    = \sqrt{\mu^2 + \rho^2}$ so that 
\begin{equation}
\label{eq:Elman}
K_k(\Omega) \leq \left[1 - \frac{ 1}{4 (\mu^2 + \rho^2)}  \right]^{k/2} \text{ (Elman bound)}. 
\end{equation}

\paragraph{Disk bound.}

From \cite[Theorem 5.2]{zbMATH03901905} another linear bound can be obtained. If $S$ is a non-empty compact set of $\mathbb C$ such that there exists a disk that contains $S$ but not $0$, then there is a disk $D(a,r)$ of center $a$ and radius $r$ that contains $S$ while minimizing $r/|a|$. 
In this case $ K_1(S) = {r}{/|a|}$. 
Furthermore, in \cite[(5.5) and (5.6)]{zbMATH03901905}, the case where $S$ is a segment is considered. In particular, if $S=[\alpha, \overline \alpha]$, the optimal disk is centered at $a = |\alpha|^2/ \Re (\alpha)$ and of radius $r = |a - \alpha| = |a - \overline \alpha|$ and   $
K_1([\alpha, \overline \alpha]) = |\Im(\alpha)|/{|\alpha|}.$
To come back to our rectangle $\Omega$, we first consider the segment $S = [1 - i \rho, 1+ i\rho]$ (\textit{i.e.}, $\alpha = 1-i\rho$) which is the left hand side edge of $\Omega$, and obtain
\[
a = 1 + \rho^2,\, r = | a - \alpha | = \rho \sqrt{\rho^2 + 1}, \, \text{and } K_1([1+i \rho , 1 - i \rho]) = \frac{\rho}{\sqrt{1 + \rho^2}} \cdot
\]
The same polynomial is optimal for the min-max problem posed over any compact set that both contains $[1 - i \rho, 1+ i\rho]$ and is contained in $D(a,r)$. For this reason it also holds that 
\begin{equation}\label{eq:boundOS}
K_k(\Omega) \leq \left[ \frac{\rho}{\sqrt{1 + \rho^2}} \right]^{k} \text{ if }  \mu \leq 2a -1 =  2\rho^2 + 1, 
\end{equation}
where the condition comes from enclosing $\Omega$ in the disk; see Figure~\ref{fig:geomdisksegment}--left.
 We could derive similar results for different values of $\mu$ and $\rho$. We have checked numerically that the value given by \eqref{eq:boundOS} matches the solution of the min-max problem over a vertical segment proved in \cite[Corollary (2.8)]{zbMATH03988604}, which is cited in \cite{zbMATH00839241}. 
When it applies ($\mu \leq \revised{2} \rho^2 + 1$), the linear bound \eqref{eq:boundOS} is sharper than \eqref{eq:Elman}. Indeed, 
\[
1 -\frac{ 1}{4(\mu^2 + \rho^2)} > 1 -\frac{ 1}{\mu^2 + \rho^2} = \frac{{\mu^2 + \rho^2 - 1}}{\mu^2 + \rho^2} > \frac{\rho^2}{1 + \rho^2}, 
\] 
since $z \mapsto (z-1) / z $ is increasing, and since $\mu > 1$, $\mu^2 + \rho^2  > 1+ \rho^2 $. Table~\ref{tab:comparelinears} gives some numerical values. The difference between both bounds is particularly significant when $\mu$ reaches its maximal admissible value $2 \rho^2+1$. 

\begin{table}[htb]
\centering
\begin{tabular}{c|cccc}
 $(\mu, \rho)$ &  $(2,4)$ & $ (33, 4)$& $(2,10)$ & $ (201 , 10)$  \\ 
\hline 
\eqref{eq:Elman} (Elman) & 0.9937 &   0.9999 &   0.9988 &    $1 - 3.09 \cdot 10^{-6}$ \\
\eqref{eq:boundOS} (Disk) &  0.9701 &   0.9701 &   0.9950 &   0.9950 \\
\end{tabular}
\caption{Comparison between the bounds for $K_1(\Omega)$ given by \eqref{eq:Elman} (Elman) and \eqref{eq:boundOS} (Disk). A smaller number corresponds to a better approximation.}
\label{tab:comparelinears}
\end{table}


\paragraph{Disk-segment bound.}

The 
rectangle
can be enclosed in a disk-segment (as shown in Figure~\ref{fig:geomdisksegment}--center)\footnote{\revised{In \cite{zbMATH05029264},
the field of values is enclosed in a disk-segment obviating the intermediate step of enclosing it in a rectangle. Here,
for uniformity in the analysis and comparison  we keep the rectangle. We do not expect much difference in the obtained bounds.}}
\[
\Omega \subset D= \{z \in \mathbb C; \, \Re(z) \geq  1 \text{ and } |z| \leq \sqrt{\mu^2 + \rho^2} \}. 
\]
An application of \cite[Lemma 2.2]{zbMATH05029264} (see also the discussion in \cite{embree2025extending}) gives
\begin{equation}
\label{eq:Beck}
K_k(\Omega) \leq K_k(D) \leq \min \left\{2 + \gamma_\beta , \frac{2}{1 - \gamma_\beta^{k+1}}  \right\} \gamma_\beta^k;
\end{equation}
where $\beta \in [0, \pi/2 ]$ and $\gamma_\beta$ are defined by   
\[
\operatorname{cos}(\beta) = \frac{1}{\sqrt{\mu^2 + \rho^2}}  \text{ and }\gamma_\beta = 2 \operatorname{sin} \left(\frac{\beta}{4-2 \beta/\pi} \right) < \operatorname{sin}(\beta) < 1. 
\]
(In \cite[Lemma 2.2]{zbMATH05029264}, it is also proved that $ \gamma_\beta^k \leq K_k(D) $ but this is not a lower bound for $K_k(\Omega)$.)

\begin{figure}[htb]
\begin{minipage}{0.36\textwidth}
\includegraphics[width=\textwidth]{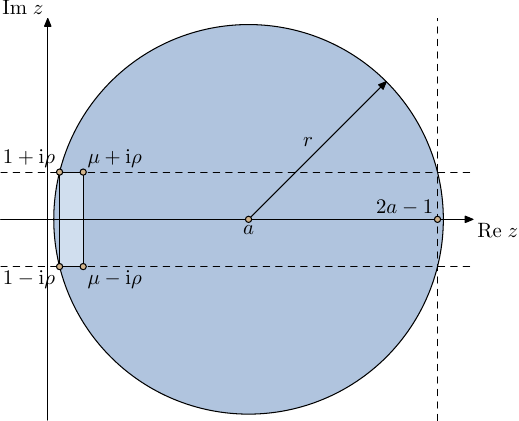}
\end{minipage}
\begin{minipage}{0.28\textwidth}
\includegraphics[width=\textwidth]{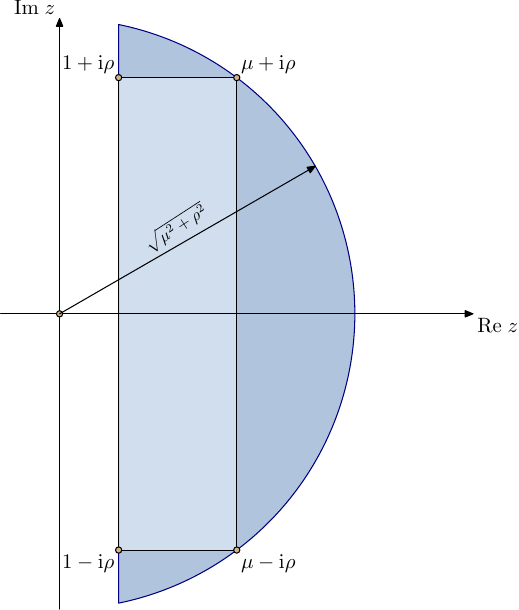}
\end{minipage}
\begin{minipage}{0.35\textwidth}
\includegraphics[width=\textwidth]{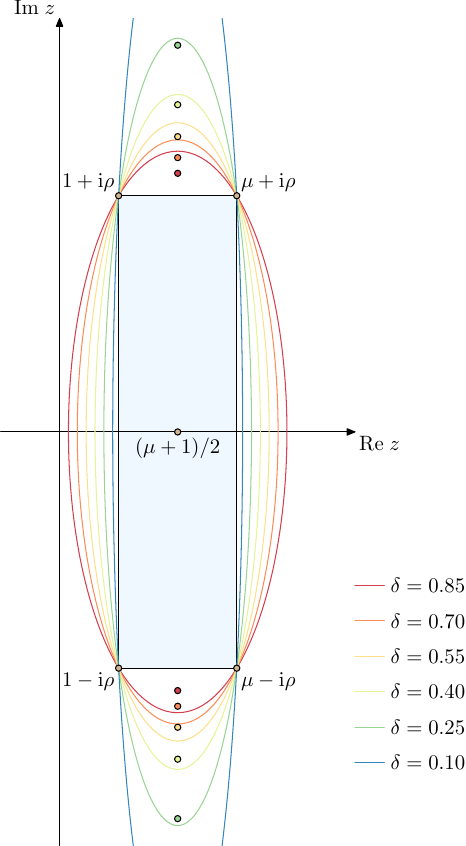}
\end{minipage}
\caption{$\Omega$ is defined by $\mu = 3$ and $\rho = 4$. Left: Disk enclosing $\Omega$ ($a = 17$ and $r = 4 \sqrt{17}$). Center: Disk-segment circumscribing $\Omega$. Right: Family of ellipses $E(c,d,b)$ circumscribing $\Omega$. The dot on the real axis is the center of all ellipses. The other dots are the foci of the ellipses.}
\label{fig:geomdisksegment} 
\end{figure}

\paragraph{Ellipse bound.}

Let $E$ be an ellipse that circumscribes\footnote{\revised{We mention that while we consider ellipses bounding rectangles
containing the field of values of $A$, ellipses bounding rectangles bounding the eigenvalues of $A$ where studied in \cite{Fischer99} in the context
of GMRES convergence.}}
$\Omega$. It is parametrized by $(c, d, a) \in \mathbb C^3$, with $c$ its center, $d$ its focal distance and $a$ its semi-major axis.  For an ellipse $E$ that circumscribes our rectangle $\Omega$, we first choose  $c  = (\mu +1)/2$ and then determine $a$ and $d$. The Cartesian equation for $E$ in $\mathbb R^2$ is 
\[
\frac{(x-c)^2}{\alpha^2} + \frac{y^2}{\beta^2} \leq 1, \text{ for some } \alpha, \beta \in \mathbb R^+.
\]
A single equation ensures that the vertices $(c \pm \frac{\mu - 1}{2}, \pm \rho)$ of $\Omega$ (once converted into Cartesian coordinates) are on $\partial E$: 
\[
\frac{(\mu-1)^2}{4\alpha^2} + \frac{\rho^2}{\beta^2} = 1 \quad \Leftrightarrow \quad \beta = \rho \left[1 - \frac{(\mu - 1)^2}{4 \alpha^2}  \right]^{-1/2} \text{ and }  \alpha > \frac{\mu-1}{2} \cdot
\]
The circumscribing ellipses are parametrized by $\alpha > \frac{\mu-1}{2}$. The origin is outside the ellipse if  $\alpha < \frac{\mu+1}{2}$. To go back to complex notation, there are two cases. If $\alpha > \beta$, the major semi-axis is horizontal so $a = \alpha$ and $d = \sqrt{\alpha^2 - \beta^2}$. If $\alpha < \beta$, the major semi-axis is vertical so $a =i  \beta$ and $d =i \sqrt{\beta^2 - \alpha^2}$. An illustration of this second case can be seen in Figure~\ref{fig:geomdisksegment}--right.  

The well-known bound $K_k(E)$ for the min-max problem over the ellipse $E$ comes from maximizing a (near-optimal) scaled Chebyshev polynomial given \textit{e.g.}, in \cite[equation (6.119)]{zbMATH01953444}: 
\begin{equation}
\label{eq:boundellipse}
K_k(E) \leq \frac{C_k\left(\frac{a}{d}\right)}{\left|C_k\left(\frac{c}{d}\right)\right|}, \text{ reached by } \hat C_k: \revised{z} \mapsto \frac{C_k\left(\frac{c-z}{d}\right)}{C_k\left(\frac{c}{d}\right)} \text{ at } z = c+a, 
\end{equation} 
where $C_k$ is the Chebyshev polynomial of the first kind of degree $k$  and it has implicitly been assumed that $a/d$ is real, either because $a$ and $d$ are both real (\textit{horizontal ellipse}) or because they are both imaginary (\textit{vertical ellipse}). The asymptotic convergence rate, given in \cite[(6.121)]{zbMATH01953444}, is  
\begin{equation}
\label{eq:ellasymptotic}
\frac{C_k\left(\frac{a}{d}\right)}{\left|C_k\left(\frac{c}{d}\right)\right|} \approx   \left| \frac{ a + \sqrt{a^2 - d^2}}{c + \sqrt{c^2 - d^2}}\right| ^k.
\end{equation}

\begin{figure}[htb]
\includegraphics[width=0.95\textwidth]{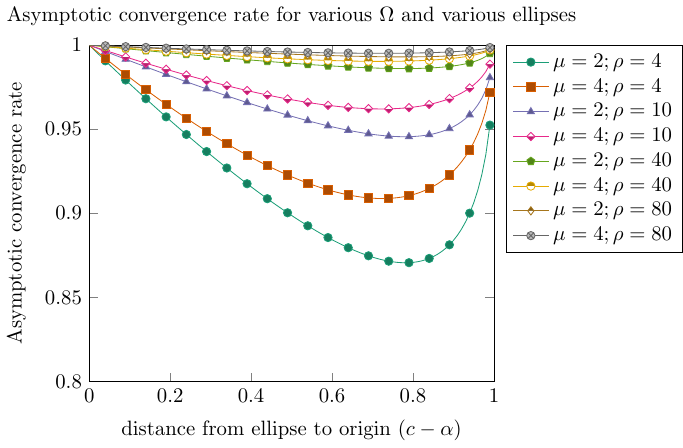}
\caption{For various choices of $\Omega$ parametrized by $\mu$ and $\rho$, asymptotic convergence rate $| ({ a + \sqrt{a^2 - d^2}})/({c + \sqrt{c^2 - d^2}})|$ from \eqref{eq:ellasymptotic} with respect to the distance from the ellipse to zero. For each $\Omega$ there is an optimal ellipse that gives the best (\textit{i.e.}, lowest) convergence rate.} 
\label{fig:ellasymptotic} 
\end{figure}

In Figure~\ref{fig:ellasymptotic} we plot the asymptotic convergence rate $| ({ a + \sqrt{a^2 - d^2}})/({c + \sqrt{c^2 - d^2}})|$  with respect to $c - \alpha$, the distance from the origin to the ellipse, for various choices of $\Omega$. Generally speaking, the asymptotic convergence rate deteriorates (\textit{i.e.}, increases) when $\Omega$ gets larger. We observe that in each case there is an optimal value of $\alpha$ which minimizes the asymptotic convergence rate. This reflects the fact that there is a trade-off between the ellipse not becoming too tall and it not becoming too close to the origin. In Figure~\ref{fig:ellcv}, the bound given by \eqref{eq:boundellipse} is plotted with respect to the polynomial order $k$ for various choice of ellipses (parametrized by the distance from the ellipse to $0$) and two values of $\rho$. When $\Omega$ gets taller, 
\revised{the value of
$K_k(\Omega)$ 
is reduced at a slower pace when $k$ grows}:
for example, at iteration $150$, the best bound for $K_{150}$ is $0.12$ when $\rho = 40$ whereas it is $2.2 \cdot 10^{-4}$ when $\rho = 10$. These best bounds are reached by the ellipse that has the optimal convergence rate. 
From now on when we consider the bound that comes from enclosing $\Omega$ in an ellipse, we choose the near-optimal ellipse found by selecting the $\alpha$ that minimizes the asymptotic convergence rate  out of $100$ values evenly spread out over $](\mu-1)/2,(\mu+1)/2[$. 

\begin{figure}
\includegraphics[width=0.95\textwidth]{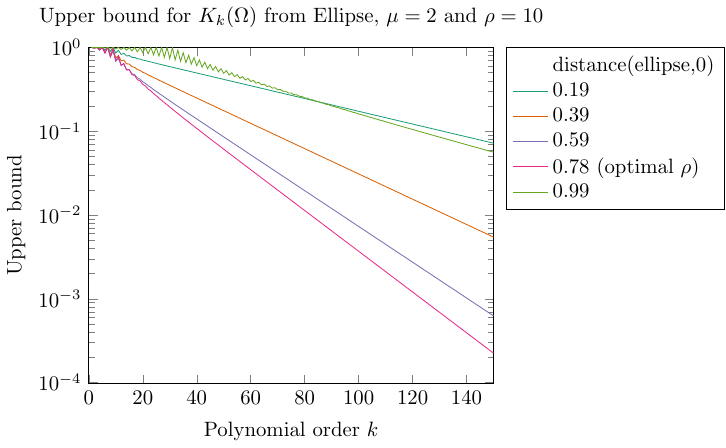}
\\
\includegraphics[width=0.95\textwidth]{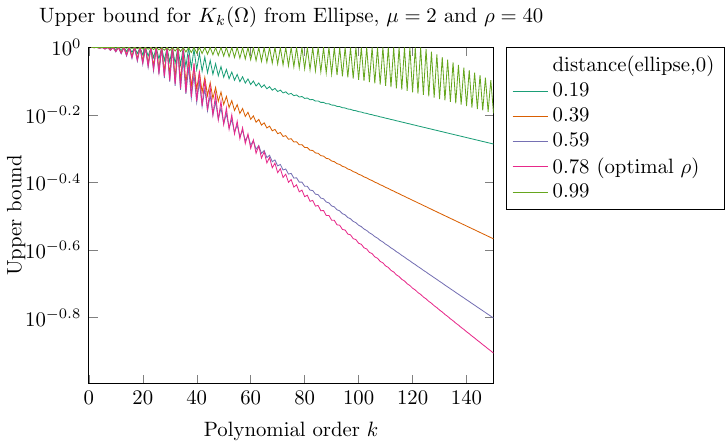}
\caption{Bound for $K_k(\Omega)$ computed using \eqref{eq:boundellipse} for different choices of the enclosing ellipse including the one that gives the optimal convergence rate. Top: $(\mu, \rho) = (2, 10)$ --   Bottom: $(\mu, \rho) = (2, 40)$. The vertical axis is not the same for the two plots. } 
\label{fig:ellcv} 
\end{figure}

\begin{remark}
Figure~\ref{fig:hatCk} shows a contour plot of $|\hat C_k|$ for $k = 1,\,2,\,3$ and for the optimal ellipse. The plot is over the rectangle that encloses the ellipse (which in turn encloses $\Omega$). We observe that the functions have largest magnitude toward the corners but these points are not in the ellipse (or in $\Omega$). To improve the bound \eqref{eq:boundellipse}, we also tried numerically maximizing $|\hat C_k|$ over $\Omega$ instead of over the ellipse. This does not significantly improve the result so we do not report results here. \revised{The bound obtained with this technique is larger for $k=3$ than for $k=2$.} The fact that $\hat C_k$ is not in general optimal goes back to \cite{zbMATH00012879}. 
\end{remark}

\begin{figure}
\begin{center}
\begin{minipage}{0.43\textwidth}
\[k=1; \quad K_1(\Omega) \leq  3.7060 \] 
\includegraphics[width=\textwidth]{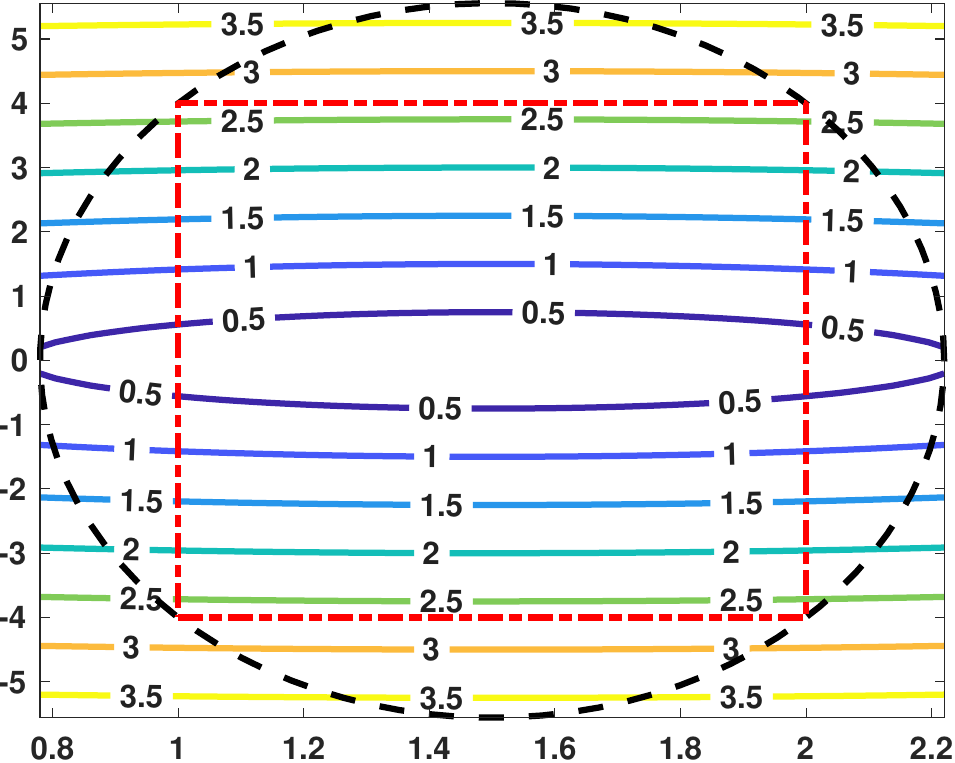}
\end{minipage}
\begin{minipage}{0.43\textwidth}
\[k=2; \quad K_2(\Omega) \leq  0.9007   \] 
\includegraphics[width=\textwidth]{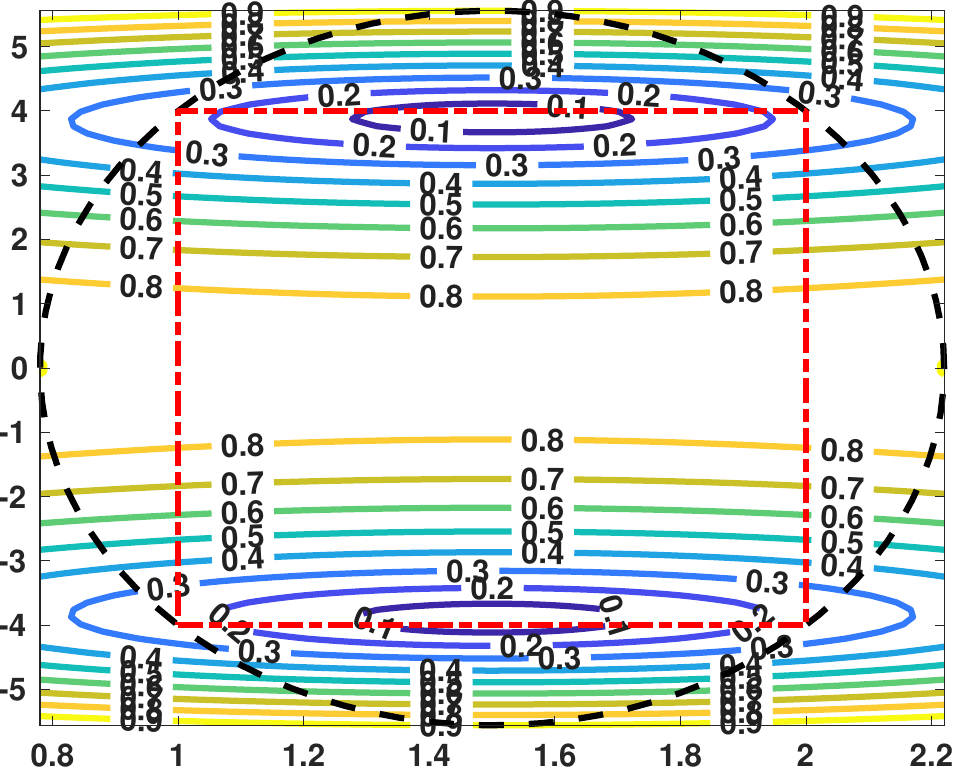}
\end{minipage}
\\
\begin{minipage}{0.43\textwidth}
\[k=3; \quad K_3(\Omega) \leq   1.2011 \] 
\includegraphics[width=\textwidth]{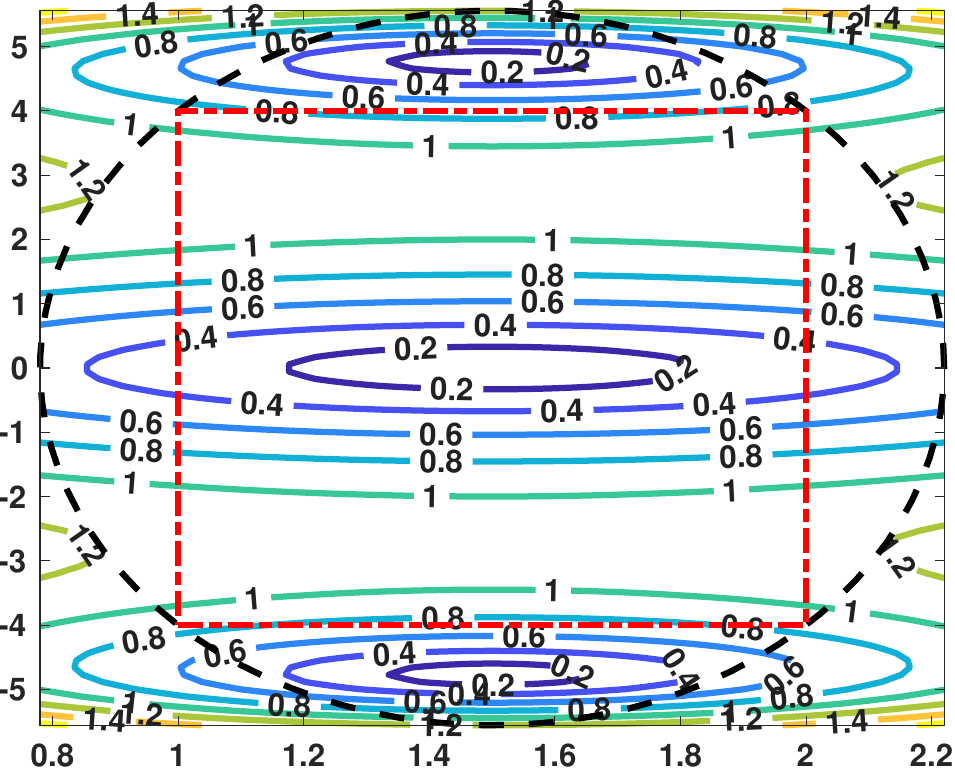}
\end{minipage}
\begin{minipage}{0.43\textwidth}
\[k=4; \quad K_4(\Omega) \leq  0.6960\]
\includegraphics[width=\textwidth]{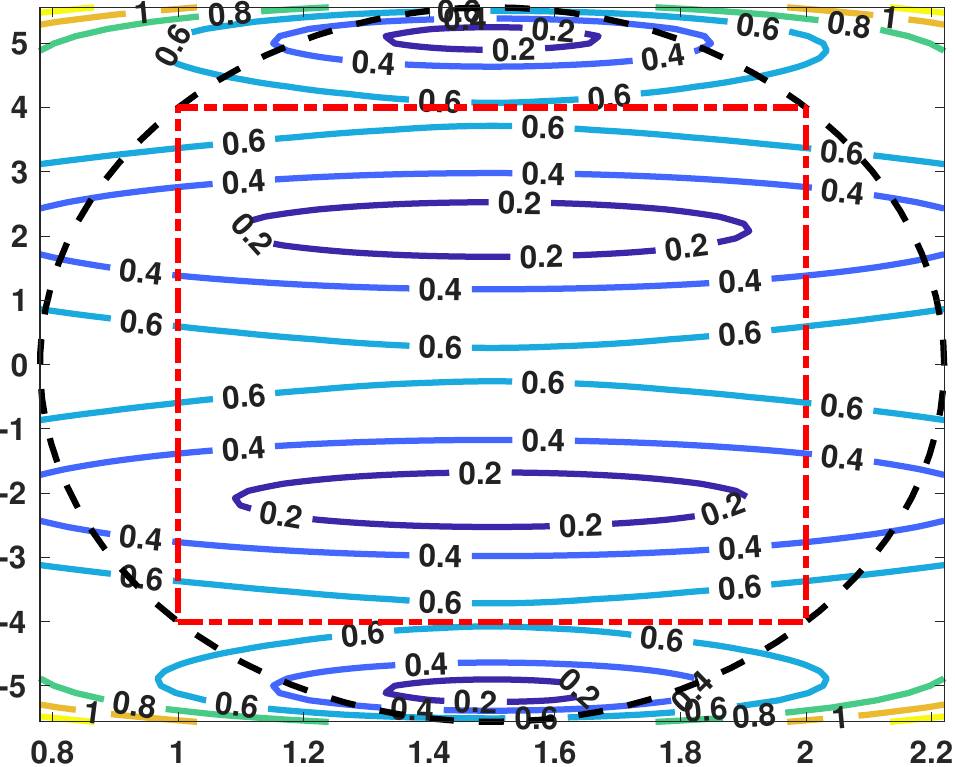}
\end{minipage}
\end{center}
\caption{\revised{Contour plot (in the imaginary plane) of $|\hat C_k|$, the modulus of the polynomial from which the bound in \eqref{eq:boundellipse} is obtained for polynomial orders $k = 1,\,2,\,3,\,4$. The red dot-dashed rectangle shows $\Omega$ (for $\mu =2$ and $\rho = 4$). The black dashed ellipse is the optimal ellipse from which $\hat C_k$ is defined. The resulting upper bound for $K_k(\Omega)$ is also reported. The real and imaginary axes have different scales. }} 
\label{fig:hatCk}
\end{figure}

\paragraph{Conformal Mapping bound.}

In \cite[Proof of Lemma 2.2]{zbMATH05029264}, as a first step in the proof of~\eqref{eq:Beck}, a lower and upper bound for $K_k(C)$ is proved for any convex compact set \revised{$C$} such that $0 \not\in C$ that contains at least two elements. Letting $\phi$ denote the Riemann conformal mapping from $\overline{ \mathbb C} \setminus C$ onto the exterior of the closed unit disk with $\phi(\infty) = \infty$, the result is that, for any $k \geq 1$,
\begin{equation}
\label{eq:boundconf}
\gamma^k \leq  K_k(C) \leq \min \left\{2 + \gamma , \frac{2}{1 - \gamma^{k+1}}  \right\} \gamma^k \leq 3 \gamma^k; \text{ where } \gamma := \frac{1}{\phi(0)} \cdot
\end{equation}

This is a tight bound since a factor less than $3$ separates the left- and right- hand sides. For the rectangle $\Omega$ we have evaluated $\gamma$ numerically thanks to the (Schwarz-Christoffel) SC-toolbox \cite{zbMATH01757217} for Matlab. This takes only four lines of code, as follows.

\begin{verbatim}
Omega = polygon([1+1i*rho 1-1i*rho mu-1i *rho mu+1i*rho]);
M = extermap(Omega);
invM = inv(M); %Map from the exterior of Omega to the interior of the unit disk
gamma = abs(invM(0))  
\end{verbatim}
Indeed, \verb+invM+  maps to the interior of the unit disk. To map to the exterior of the unit disk and obtain $\phi$, \verb+invM+  should be composed with $z \mapsto 1/z$. This inverse cancels out with the inverse in the definition of $\gamma$. Schwarz-{Christoffel} conformal mapping applies to any polygon $p$ so this technique is more general than the solution of the min-max problem on a rectangle.  

\paragraph{Faber polynomial bound.}
As a final bound we include part of estimate \cite[(11)]{zbMATH02182054}. For any $k \geq 1$
\begin{equation}
\label{eq:boundFaber}
K_k(C) \leq \frac{2}{|F_k^E(0) |}; \text{ where } F_k^E \text{ is the } k\text{-th Faber polynomial for $C$}. 
\end{equation}
 The Faber polynomial is the polynomial part of the Laurent expansion at infinity of $\phi^n$. When $C$ is polygonal (\textit{e.g.}, rectangular), it can also be computed by the SC-toolbox as follows. 
\begin{verbatim}
F = faber(p,k); %k is the polynomial degree
\end{verbatim}

\paragraph{Comparison between all the bounds.}

Figures~\ref{fig:mu2rho4}--\ref{fig:mu2rho40}
show the values given by all bounds (`Disk' \eqref{eq:boundOS}, `Ellipse' \eqref{eq:boundellipse} , `Disk-segment' \eqref{eq:Beck} , `Conformal map' \eqref{eq:boundconf} and `Faber' \eqref{eq:boundFaber}) with respect to the polynomial order $k$. If the value returned by a bound is larger than $1$, it is set to $1$. If the value returned by a bound at polynomial order $k$ is larger than at $k-1$, we replace it by the value at $k-1$. This way there are no distracting oscillations in the plots. 

Each Figure corresponds to a different choice of $\Omega$. Figure~\ref{fig:mu2rho4} corresponds to $\Omega = [1,2] \times [-4,4]$. It is observed that for $k=100$ the `Conformal map' and `Faber' bounds give the best estimate, followed by `Disk-segment' and `Ellipse' (which are several orders of magnitude larger). The `Disk' bound is very pessimistic at $k=100$. However, to make this picture more complete, the bottom of Figure~\ref{fig:mu2rho4} shows the same data up to the order $k=10$. It becomes apparent that, the bounds `Disk-segment', `Conformal Map' and `Faber', are lower than $1$ only for $k \geq 5$ or $6$. This is a consequence of the multiplicative constant. The bounds `Disk' and `Ellipse' do provide a bound even for these small values of $k$. In fact, `Ellipse' is the best bound up to $k=9$. 

Figure~\ref{fig:mu33rho4} is for $\Omega = [1,33] \times [-4,4]$, the behavior is very similar to that observed in Figure~\ref{fig:mu2rho4} except the values of the bounds: when $\mu=33$, the best bound after $k=100$ is approximately $10^{-4}$, versus $10^{-9}$ when $\mu = 2$.   

Figure~\ref{fig:mu2rho10} is for $\Omega = [1,2] \times [-10,10]$. We can again make similar comments on the comparison between the bounds and the best final value is again approximately $10^{-4}$. The `Ellipse' bound is the best for $k \leq 15$.  

Finally, Figure~\ref{fig:mu2rho40} is for $\Omega = [1,2] \times [-40,40]$. This time the final value is significantly worse: the best estimate with $k=100$ is larger than $10^{-1}$. Making the rectangle taller significantly worsens the bound.  

The takeaway is that bounds that approximate correctly the asymptotic convergence rate (`Conformal map', `Faber') give the best result after a large number of iterations but `Disk' and `Ellipse' are the only ones that are guaranteed to be informative at every iteration (even the first). `Disk-segment' gives the correct convergence rate but on the disk-segment (which is sometimes much larger than $\Omega$). It does have the advantage over `Conformal Map' and `Faber' that the formula is more direct.  
\begin{figure}
\includegraphics[width=0.9 \textwidth]{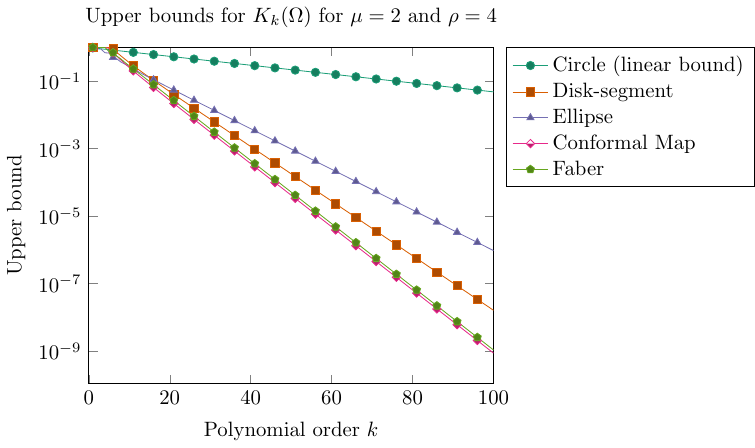}
\\
\includegraphics[width=0.9 \textwidth]{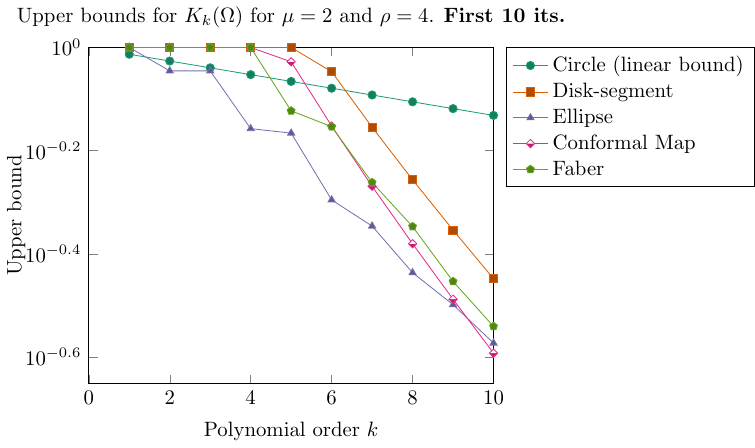}
\caption{Comparison between all the bounds for $\mu = 2$ and $\rho = 4$. The bottom plot is a zoom on the first 10 iterations.} 
\label{fig:mu2rho4}
\end{figure}

\begin{figure}
\includegraphics[width=0.9 \textwidth]{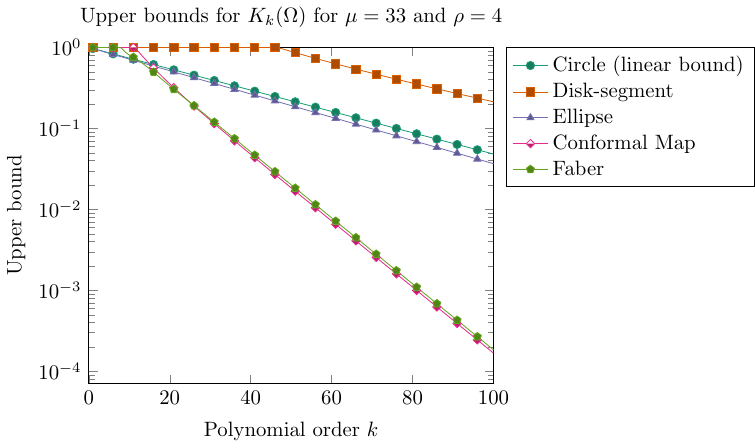}
\caption{Comparison between all the bounds for $\mu = 33$ and $\rho = 4$.}
\label{fig:mu33rho4}
\end{figure}

\begin{figure}
\includegraphics[width=0.9 \textwidth]{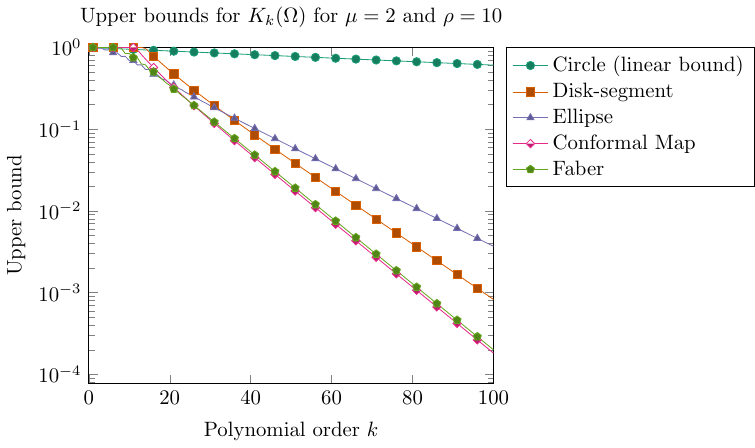}
\\
\includegraphics[width=0.9 \textwidth]{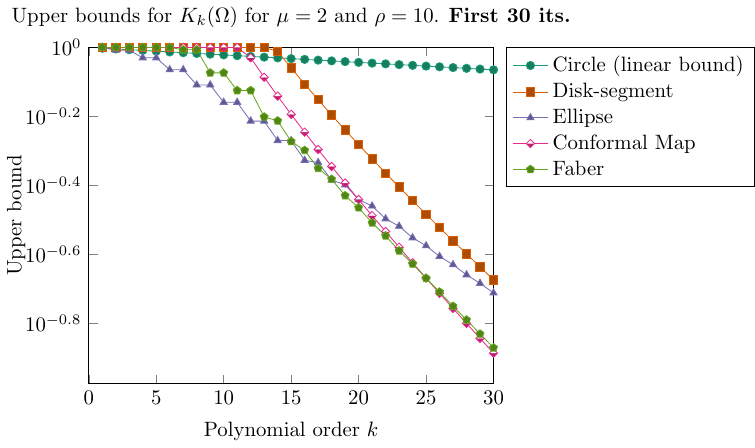}
\caption{Comparison between all the bounds for $\mu = 2$ and $\rho = 10$. The bottom plot is a zoom on the first 30 iterations.} 
\label{fig:mu2rho10}
\end{figure}

\begin{figure}
\includegraphics[width=0.9 \textwidth]{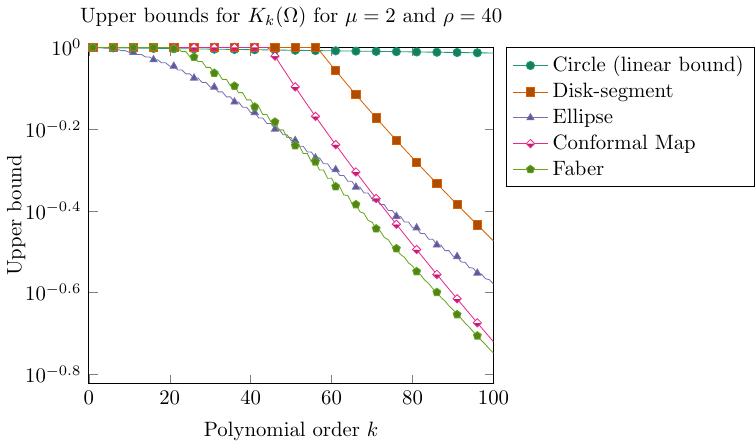}
\caption{Comparison between all the bounds for $\mu = 2$ and $\rho = 40$. After 100 iterations, the best of the bounds is still above $10^{-1}$.}
\label{fig:mu2rho40}
\end{figure}

\section{Numerical Results} \label{sec:numerical}
In this section, the problem considered is the convection-diffusion-reaction problem posed in \revised{${\cal D} = [-1,1]^2$}.  The strong formulation of the problem is: 
\begin{align*}
c_0 u + \operatorname{div}(\mathbf a u) - \operatorname{div} (\nu \nabla u) &= f \text{ in } \revised{\cal D},\\ 
u &= 0 \text{ on } \partial \revised{\cal D}.
\end{align*}

The variational formulation is: 
Find $u \in H^1_0(\revised{\cal D})$ such that 
\[
\underbrace{\int\revised{\cal D} \left(\left(c_0 + \frac{1}{2} \operatorname{div} \mathbf a \right) uv+ \nu \nabla u \cdot \nabla v \right)}_{\text{symmetric part}}  + \underbrace{\int_\revised{\cal D} \left(\frac{1}{2} \mathbf a \cdot \nabla u v - \frac{1}{2} \mathbf a \cdot \nabla v u \right)}_{\text{skew-symmetric part}}  = \int_\revised{\cal D} fv, 
\]
for all $ v \in H^1_0(\revised{\cal D})$. The reaction coefficient $c_0 >0$ and viscosity $\nu >0 $ are assumed to be constant over $\revised{\cal D}$. The right hand side and convection field are chosen as 
\[
f(x,y) =  \operatorname{exp} (-2.5(x ^2 + (y +0.8)^2)), 
\text{ and } \mathbf a(x,y) = \eta \pi \begin{pmatrix} - y - 0.8 \\  x  \end{pmatrix} \text{ with } \eta = 100. 
\]
It can be remarked that $ \operatorname{div} \mathbf a =0$. For the parameters in the problem, we set
\[
c_0=1, \, \nu = 1, \text{ and } \eta =100.
\]

\begin{figure}
\begin{center}
\includegraphics[width=0.49\textwidth]{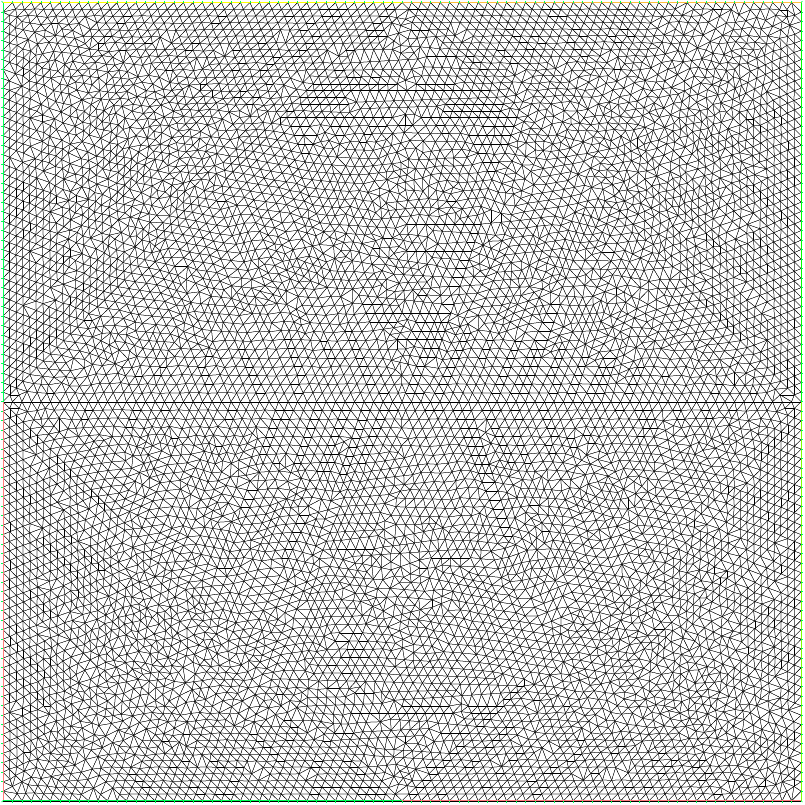}
\includegraphics[width = 0.49 \textwidth]{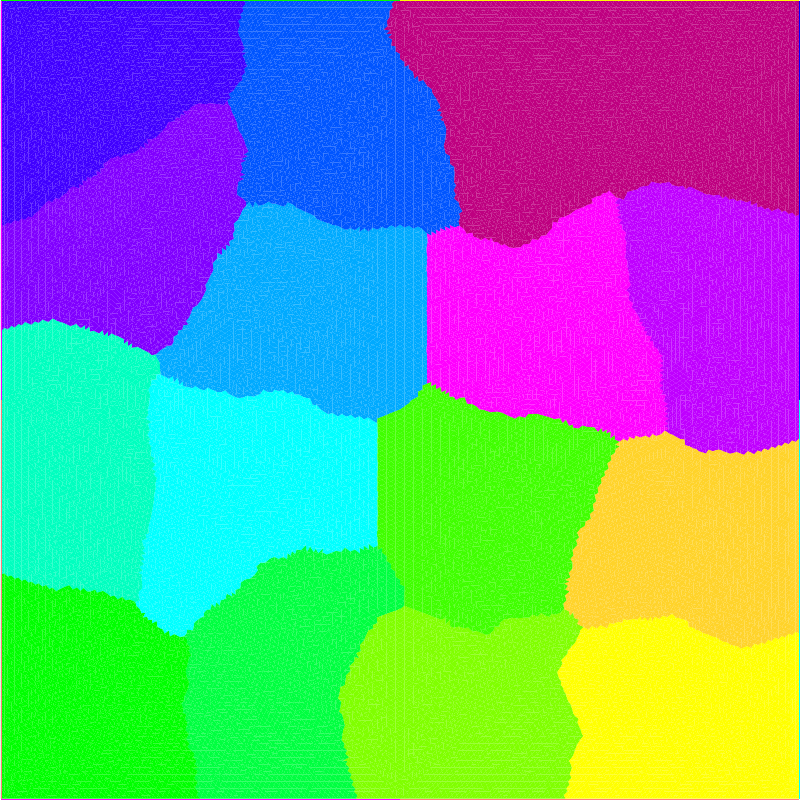}
\end{center}
\caption{Left: Mesh consisting of 8643 vertices and 16948 triangles. Right: Partition into 16 subdomains.}
\label{fig:meshnloc21}
\end{figure}


The problem is discretized by Lagrange $\mathbb P_1$ finite elements on a triangular mesh with 8643 vertices and 16948 triangles (see Figure~\ref{fig:meshnloc21} -- left). GMRES is implemented in Octave while the finite element matrices and right hand side are assembled by FreeFem++ \cite{MR3043640}.  All iteration counts correspond to the number of iterations needed to reach $\|\br_i\|_\bH < 10^{-10} \|\bb \|_\bH$ starting from a zero initial vector. 
The Dirichlet boundary condition has been enforced by elimination. 
Let $(\phi_i)_{1 \leq i \leq n}$ denote the $\mathbb P_1$ finite element basis corresponding to the mesh. The problem matrix splits into
\[
\bA = \bM + \bN, \text{ with } \bM \text{ spd and } \bN \text{ skew-symmetric},
\]
where the entries of $\bM$ and $\bN$ are 
\[
\bM_{ij} = \int_\revised{\cal D} \left(c_0 \phi_i \phi_j + \nu \nabla \phi_i \cdot \nabla \phi_j \right), \text{ and } \bN_{ij}= \eta \int_\revised{\cal D} \left(\frac{1}{2} \revised{\mathbf {a}} \cdot \nabla \phi_i \phi_j - \frac{1}{2} \revised{\mathbf{a}} \cdot \nabla \phi_j \phi_i \right). 
\] 
The positive definiteness of $\bM$ is guaranteed by the assumption that $c_0$ and $\nu$ are positive. 
This is the same setup as in \cite{zbMATH07931226}, and with a change of variables as in \cite{zbMATH07860856}. 

\paragraph{Choice of preconditioner and inner product.}

As a preconditioner $\bH$, we select a domain decomposition (DD) preconditioner based on a partition of the mesh into $N = 16$ subdomains (as shown in Figure~\ref{fig:meshnloc21} (right)). 
In detail, $\HDD$ is the Additive Schwarz domain decomposition method with the GenEO coarse space \cite{2011SpillaneCR,spillane2013abstract}. The condition number of the resulting preconditioned operator is bounded by
\[
\lambda_{\min}(\bH\bM) \geq \left(1 + \frac{k_0}{\upsilon}\right)^{-1}; \quad \lambda_{\max}(\bH\bM) \leq k_0; \text{ and }\kappa(\bH \bM) \leq k_0 \left(1 + \frac{k_0}{\upsilon}\right), 
\] 
where $k_0$ ($=3$ here) denotes the maximal number of subdomains that each mesh element belongs to \cite[Theorem 4.40]{spill2014} and  $\upsilon$ is a parameter that has been set to $0.15$. The constant in the bound does not depend on the total number $N$ of subdomains or the mesh parameter $h$. 

The preconditioner is spd. It is applied on the right ($\bH_R = \bH$ and $\bH_L = \matid$) and the weight is $\bW = \bH$. 

\paragraph{Deflation Operator.}

We aim to illustrate the convergence results in Theorems~\ref{th:gevpHN} and~\ref{th:gevpMinvN} for the two spectral deflation spaces.  Once $\bM$, $\bN$ and the domain decomposition operators that go into $\bH$ have been assembled by FreeFem++, they are imported into Octave. The preconditioner $\bH$ is never assembled into a matrix. Then the generalized eigenvalue problem $ \bN \bx = \lambda \bM \bx$ or $ \bN \bx = \lambda \bH^{-1} \bx$  is partially solved by \textit{eigs}: the eigenpairs corresponding to the eigenvalues of largest magnitude are approximated. Following Definition~\ref{def:PDQD}, the deflation operator is fully defined by the choice of $\bY$ and $\bZ$. We give more detail below.   

\paragraph{First deflation space (Theorem~\ref{th:gevpMinvN}).}
Solve \eqref{eq:gevpMinvN} (\textit{i.e.}, $\bN \bz^{(j)} = \lambda_{j} \bM \bz^{(j)}$) and order the eigenpairs in non-increasing \revised{modulus} order. Let $m \in \mathbb N$ be the desired dimension of the deflation space. Assuming that $m$ is even, let $\bZ$ be 
\[
\bZ := \left[\Re(\bz^{(1)}) \,|\, \Im(\bz^{(1)})  \,|\, \Re(\bz^{(3)}) \,|\, \Im(\bz^{(3)}) \,|\, \dots \,|\,  \Re(\bz^{(m-1)}) \,|\, \Im(\bz^{(m-1)}) \right], 
\] 
where it is meant that the vectors are the columns of $\bZ$.
 We could equivalently have set $\bZ = \left[ \bz^{(1)} \, | \,  \bz^{(2)}\, | \, \dots \, | \, \bz^{(m-1)} \, | \, \bz^{(m)} \right]$ but then the deflation operator is complex whereas the linear system and preconditioner are real, an unnecessary numerical expense. 
The reason both choices are equivalent is that $\bM$ is spd and $\bN$ is skew-symmetric so the eigenvalues,
\revised{which are purely imaginary},
come in complex conjugate pairs 
(with the possible exception being $0$),
with complex conjugate pairs of eigenvectors. Finally, we choose $\bY = \bH \bA \bZ$ and this completes the definition of the deflation operator $\bP_D$. 

\paragraph{Second deflation space (Theorem~\ref{th:gevpHN}).}

Solve \eqref{eq:gevpHN} (\textit{i.e.}, $\bN \revised{\bx}^{(j)} = \lambda_{j} \bH^{-1} \revised{\bx}^{(j)}$) and order the eigenvalues in non-increasing modulus order. Let $m \in \mathbb N$ be the desired dimension of the deflation space. Assuming that $m$ is even, let the matrix $\bY$ be 
\[
\revised{\bY = \left[\Re(\bx^{(1)}) \,|\, \Im(\bx^{(1)})  \,|\, \Re(\bx^{(3)}) \,|\, \Im(\bx^{(3)}) \,|\, \dots \,|\,  \Re(\bx^{(m-1)}) \,|\, \Im(\bx^{(m-1)}) \right],} 
\] 
where we mean that the vectors are the columns of $\bY$. For $\bZ$, we study three possible choices
\begin{itemize}
\item $\bZ = \bA^{-1} \bN \bY$, so that $\range (\bZ) = \range (\bA^{-1} \bN \bY) = \range (\bA^{-1} \bH^{-1} \bY)$. This satisfies the technical assumption $\bY = \bH \bA \bZ$ but it is unrealistic to apply $\bA^{-1}$. Note that $\bP_D$ can actually be assembled with $\bA \bZ = \bN \bY$ but the knowledge of $\bZ$ is needed to compute the solution to the original (non-deflated) problem. 
\item $\bZ = \bY$.
\item $\bZ = \bN \bY$.
\end{itemize}
The last two choices do not satisfy the technical assumption $\bY = \bH \bA \bZ$ but they are numerically feasible. 

\paragraph{Results.}

The spectra of both generalized eigenvalue problems are shown in Figure~\ref{fig:spectrum} where the modulus of the first $600$ 
\revised{eigenvalues} has been plotted. We observe that there is a lot of similarity between both curves. The table in Figure~\ref{fig:spectrum} also gives the numerical \revised{radii} of $\bM^{-1} \bN$ and $\bH \bN$ as well as some particular eigenvalues that confirm the closeness. \revised{These eigenvalues are used in the discussion below.} 

Figure~\ref{fig:fov} shows the $\bH$-weighted spectrum and the $\bH$-weighted field of values of the preconditioned operator $\bA \bH$. The axes have different scales: the field of values is actually tall and skinny. The rectangle $\Omega$ that encloses $FOV^\bH(\bA\bH)$ is 
\[
\Omega_1 = [0.21; \, 3.00] \times i [-48.9; \, 48.9] = 0.21 \left( [1 ; \, 14.3] \times i [\revised{-}233; \, 233]  \right). 
\]  
This is exactly $\Omega_1$ from Theorem~\ref{th:CP} (convergence without deflation).

Following the work in Section~\ref{sec:minmaxrec}, the best bound for the min-max problem on $\Omega_1$ for $k=200$ is $K_{200}(\Omega_1) \leq 0.85$. The corresponding GMRES residual bound is $\| \br_{200}\|_\bH / \| \br_0 \|_\bH \leq (1 + \sqrt{2}) K_{200}(\Omega_1) \leq 2.04 $. Unfortunately any bound larger than $1$ is not useful. Next we apply Theorem~\ref{th:gevpHN}. If $m=100$  vectors of $\bH\bN$ are deflated, the bound computed from $\lambda_{101}$ is $K_{200}( [0.21; \, 3.00] \times i [-11.7; \, 11.7]) \leq 0.07$. If $m=300$  vectors of $\bH\bN$ are deflated, the bound computed from $\lambda_{301}$ is $K_{200}( [0.21; \, 3.00] \times i [-7.1; \, 7.1]) \leq 0.02$. It is predicted that convergence improves when more deflation vectors are added. \revised{The estimates from Theorem~\ref{th:gevpMinvN} for deflation of eigenvectors of $\bM^{-1} \bN$ can also be computed in the same way (see below). Without deflation, Theorem~\ref{th:CP} introduces $\Omega_2 = [0.21; \, 3.00] \times  3.00 * i [-64.5; \, 64.5] = [0.21; \, 3.00] \times  i [-193.5; \, 193.5]$ for which the bound on the min-max problem resulting from the strategy in Section~\ref{sec:minmaxrec} is  $K_{200}(\Omega_2) \leq 0.98$. This is worse than previously, as expected, because $\Omega_2$ from Theorem~\ref{th:gevpMinvN} is a superset of  $\Omega_1$ above. 
}

\revised{
\begin{table}
\begin{center}
Theorem~\ref{th:gevpHN} and deflation of eigenvectors of  $\bH \bN$ (with $\bY = \bH\bA\bZ$)\\
\begin{tabular}{c|c|c|c|c}
&\multicolumn{2}{c|}{iteration $100$}& \multicolumn{2}{c}{iteration $180$}\\
\hline
                              & Bound& Experiment  & Bound& Experiment  \\ 
\hline
Without deflation             & $2.29$ & $0.013$         & $2.08$ & $2.3\cdot 10^{-4}$\\         
Deflation of $m= 100$ vectors & $1.02$ & $9.4\cdot10^{-4}$    & $0.29$ & $7.3 \cdot 10^{-7}$\\  
Deflation of $m= 300$ vectors & $0.43$ & $7.3 \cdot10^{-6}$    & $0.061$  & $9.7 \cdot 10^{-11}$ \\
\end{tabular}
\end{center}
\end{table}
\begin{table}
\begin{center}
Theorem~\ref{th:gevpMinvN} and deflation of eigenvectors of  $\bM^{-1} \bN$\\ 
\begin{tabular}{c|c|c|c|c}
&\multicolumn{2}{c|}{iteration $100$}& \multicolumn{2}{c}{iteration $180$}\\
\hline
                              & Bound& Experiment  & Bound& Experiment  \\ 
\hline
Without deflation             & $2.40$  &  $0.013$  & $2.38$  & $2.3\cdot 10^{-4}$  \\         
Deflation of $m= 100$ vectors & $2.20$  & $1.2 \cdot 10^{-3}$  & $1.98$  & $1.0 \cdot 10^{-6}$  \\  
Deflation of $m= 300$ vectors & $1.97$  &  $1.2 \cdot 10^{-5}$ & $0.96$  & $2.1 \cdot 10^{-10}$  \\
\end{tabular}
\caption{\revised{Numerical comparison of theoretical bounds and GMRES residuals. `Bound': relative residual bound as given in Theorem~\ref{th:gevpHN} (top) and Theorem~\ref{th:gevpMinvN} (bottom) at iterations $100$ and $180$. `Experiment': relative residual at iterations $100$ and $180$ of GMRES without deflation and with deflation of $m=100$ eigenvectors and $m=300$ eigenvectors of either $\bH \bN$ (top), or  $\bM^{-1} \bN$ (bottom). }}
\label{tab:theoryexperiment}
\end{center}
\end{table}
}
\revised{Finally, the convergence of GMRES on the proposed test case is analyzed. A first check is that the residuals satisfy the bounds from Theorems~\ref{th:gevpHN} and~\ref{th:gevpMinvN}. Explicit values are given in Table~\ref{tab:theoryexperiment}. It follows that the achieved residual norms are indeed below the bounds given in Theorems~\ref{th:gevpHN} and~\ref{th:gevpMinvN}. It is also observed that the theoretical bounds are not tight compared to the experiment to the extent that they do not give a good approximation of the residual. What the theoretical bounds do indicate is how deflation improves convergence and in this sense the theoretical analysis is successful. 
}

Convergence  curves are presented in Figure~\ref{fig:spectrumNM}. There, it is confirmed that the spectral deflation that we have proposed significantly accelerates convergence.  
In fact, without deflation or with $m=100$, there is no convergence in less than 200 iterations, 
in the sense that the relative residual norm is larger than $10^{-9}$,
while for $m=300$, convergence is achieved.
There is hardly any difference in convergence between deflating eigenvectors of $\bM^{-1} \bN$ and deflating eigenvectors of $\bH \bN$. There is also very little difference between the three variants in the second case. This is good news: the variants $\bY = \bZ$ and $\bN \bY = \bZ$ which lack the technical assumption $\bY = \bH \bA \bZ$ are just as efficient numerically.

\section{Conclusion}

We have presented an analysis of GMRES based on the Crouzeix-Palencia result that the field of values is a $(1+\sqrt 2)$ spectral set. The role of (left, right) preconditioning, weighting and deflation has been made explicit in the bounds. Two spectral deflation spaces were studied. Either the high-frequency eigenvectors of $\bM^{-1} \bN$ are deflated (as in \cite{zbMATH07931226}) or the high-frequency eigenvectors of $\bH \bN$ are deflated. Unless the inverse of $\bM$ is known, we would always recommend the second option. Indeed, with an iterative eigensolver, only applications of $\bH$ and $\bN$ are necessary (whereas with the first choice, the action of $\bM^{-1}$ must be computed, or approximated, many times). Theoretical results and numerical experiments show that deflation of these vectors indeed accelerates convergence of GMRES in terms of iterations.   
\\
\medskip

\begin{figure}[hbt]
\centering
\begin{minipage}{0.7\textwidth}
\includegraphics[width=0.7 \textwidth]{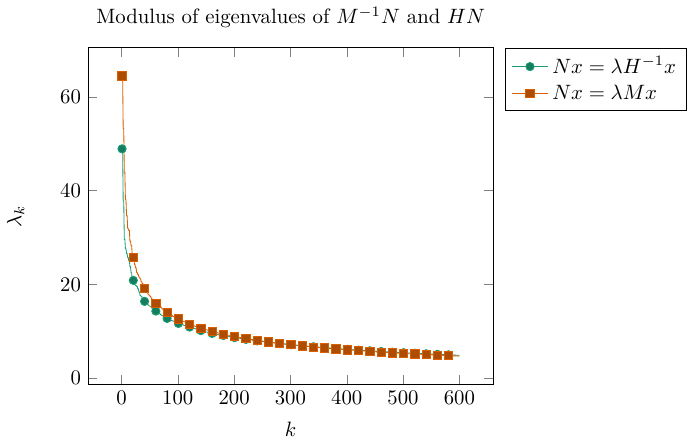}
\end{minipage}
\hspace*{-52mm}
\begin{minipage}{0.25\textwidth}
\begin{tabular}{ccc}
~~\\
Gevp & $\bN \bx = \lambda \bM \bx$ & $\bN \bx = \lambda \bH^{-1} \bx$ \\
$\rho = |\lambda_1|$ &  $  64.5$ & $48.9$ \\
$|\lambda_{101}|$ & $12.7$ & $11.7$ \\
$|\lambda_{301}|$ & $7.11$ & $ 7.10$ \\ 
$|\lambda_{600}|$ & $4.69$ & $4.87 $ \\ 
\end{tabular}
\end{minipage}
\caption{Upper part of the spectrum (first 600 eigenvalues) for the generalized eigenvalue problems (Gevps) that define the two deflation spaces.} 
\label{fig:spectrum} 
\end{figure}

\begin{figure}
\centering
\includegraphics[width=0.7 \textwidth]{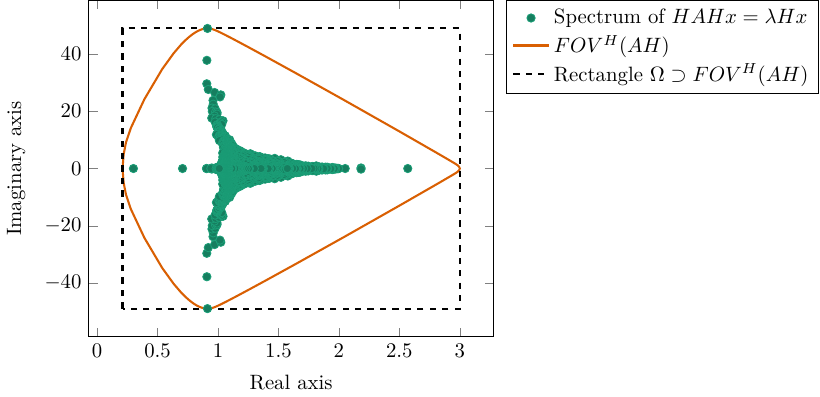}
\caption{ $\bH$-weighted spectrum and $\bH$-weighted field of values of the preconditioned operator without any deflation ($m=0$). (Remark that the axis have different scales.)}
\label{fig:fov}
\end{figure}

\begin{figure}[hbt]
\centering
\includegraphics[width=\textwidth]{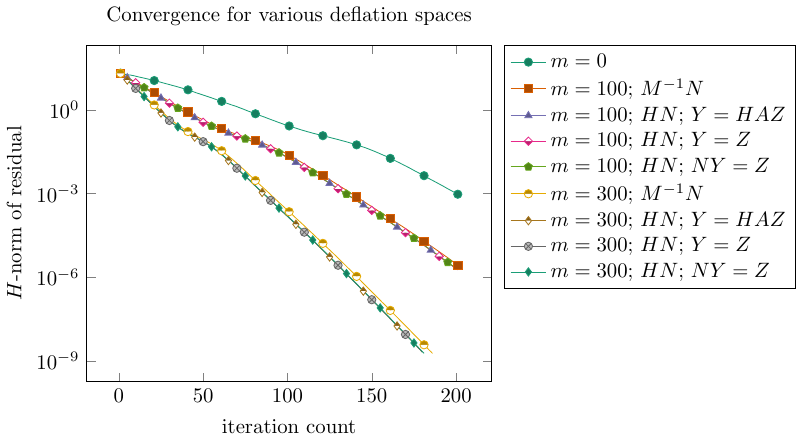}
\medskip 

{\footnotesize
\begin{tabular}{|c|c|c|c|c|}
\multicolumn{5}{c}{Summary of all iteration counts (and final residuals, either at convergence or after 200 iterations) }\\
\hline
 $m = 0$             &          \multicolumn{4}{c|}{$m=100$}            \\
\hline
                     &  $\bN \bx = \lambda \bM \bx$ & \multicolumn{3}{c|}{$\bN \bx = \lambda \bH^{-1} \bx$}   \\
\hline
                     &                              & $ \bY = \bH\bA\bZ$  & $\bY = \bZ$ & $\bN \bY = \bZ$       \\ 
\hline
$>200$               & $>200$                      & $>200$ & $> 200$ & $>200$           \\ 
{\tiny ($1.02 \cdot 10^{-3}$)} & {\tiny ($3.01 \cdot 10^{-6}$)}  & {\tiny( $2.05 \cdot 10^{-6}$)} & {\tiny($2.05 \cdot 10^{-6}$ )} & {\tiny ($2.09\cdot 10^{-6}$)} 
\\
\hline
\end{tabular}
\begin{tabular}{|c|c|c|c|}
\hline
        \multicolumn{4}{|c|}{$m=300$} \\ 
\hline
                      $\bN \bx = \lambda \bM \bx$ &  \multicolumn{3}{c|}{$\bN \bx = \lambda \bH^{-1} \bx$}  \\ 
\hline
                   &      $ \bY = \bH\bA\bZ$  & $\bY = \bZ$ & $\bN \bY = \bZ$  \\ 
\hline
  186    &  180  &  180  &  181 \\ 
{\tiny ($2.14 \cdot 10^{-9}$)} & {\tiny ($2.04 \cdot 10^{-9}$)}     &  {\tiny ($2.05\cdot10^{-9}$) }  & {\tiny ($1.85\cdot10^{-9}$) } \\
\hline
\end{tabular}
}
\caption{Convergence without a deflation space ($m=0$) and with deflation of $m=100$ or $m=300$. Both choices of deflation space are considered. In the case where the generalized eigenvalue problem is $\bN \bx = \lambda \bH^{-1} \bx$, all three variants for defining $\bZ$ with respect to $\bY$ are considered (and these curves are hardly distinguishable). The table summarizes the number of iterations needed to reduce the residual by a factor $10^{10}$ and, in parenthesis, either the converged residual or the residual after $200$ iterations when convergence is not yet achieved.}
\label{fig:spectrumNM} 
\end{figure}

\clearpage
\section*{Acknowledgements}

The authors thank Nick Trefethen for his help with the SC-toolbox for computing the convergence bounds based on Conformal mapping and Faber polynomials. 
 \revised{They also thank the two referees for their careful reading of the manuscript and their many comments and suggestions which helped
improved the presentation.}


\bibliographystyle{abbrv}
\bibliography{PolynomialBounds}

\end{document}